\documentclass[11pt,twoside]{article}

\usepackage[square,numbers]{natbib}

\iffalse 
\usepackage[autostyle]{csquotes}
\usepackage[
    backend=biber,
       style=numeric,
    firstinits = true,
    
    isnb=false,
    url=false, 
    doi=true,
    eprint=true
]{biblatex}

\renewbibmacro{in:}{
  \ifentrytype{article}{}{
  \printtext{\bibstring{in}\intitlepunct}}}

\DeclareFieldFormat[article,unpublished,book]{pages}{#1}

\DeclareFieldFormat[article]{title}{#1}
\DeclareFieldFormat[unpublished]{title}{#1}
\DeclareFieldFormat[misc]{title}{#1}
\fi

\usepackage{lipsum}

\usepackage{microtype} 

\usepackage{lmodern}
\usepackage[sans]{dsfont}
\usepackage[latin1]{inputenc}
\usepackage{color}
\usepackage{delimdelim}

\usepackage[hmarginratio=1:1,top=32mm,columnsep=20pt]{geometry} 
\geometry{left=30mm,right=30mm,top=22mm,bottom=22mm}

\usepackage{booktabs} 

\usepackage{hyperref}

\usepackage{paralist}

\usepackage{abstract}

\newcommand{\ep}{\epsilon}
\newcommand{\m}{\mbox{d}}

\newcommand{\f}{\boldsymbol}

\newcommand{\tildej}{\tilde{\jmath}}
\newcommand{\tildebj}{\tilde{\f{\jmath}}}

\newcommand{\K}{K_{H^s\rightarrow C^0}}
\definecolor{blue1}{rgb}{0,0,0}

\usepackage{titlesec} 

\usepackage{authblk}

\usepackage{fancyhdr} 
\pagestyle{fancy}
\fancyhead{} 
\fancyfoot{} 
\fancyfoot[RO,LE]{\thepage} 
\setlength{\headheight}{14pt}

\usepackage{amsmath}
\usepackage{amssymb}
\usepackage{amsthm}
\usepackage{subfigure}
\usepackage{textcomp}
\usepackage{listings}
\usepackage{setspace}
\usepackage{graphicx}
\usepackage{epstopdf}
\usepackage{eurosym}
\usepackage{inputenc}
\usepackage{inputenc}
\usepackage{appendix}
\usepackage{tabularx}
\usepackage{array}
\usepackage{makeidx}
\usepackage{etoolbox}
\usepackage{mathtools}
\usepackage{empheq}
\usepackage{mathrsfs}
\usepackage{marginnote,xspace}

\fancyhead[LE,RO]{\thepage}
\fancyfoot{}

\DeclareGraphicsExtensions{.eps}
\DeclareGraphicsExtensions{.png}

\newcommand\iid{{\emph{iid}}\xspace}

\theoremstyle{plain}
\newtheorem{theorem}{Theorem}[section]

\newtheorem{lemma}[theorem]{Lemma}

\newtheorem*{assumg*}{Assumption~(G)}

\newtheorem*{assumng*}{Assumption~(NG)}
\newtheorem*{assumng1*}{Assumption~(NG-bis)}

\newcommand{\per}{\mathrm{per}}

\theoremstyle{definition}

\newtheorem{rem}[theorem]{Remark}
\hypersetup{citecolor=black}

\newcommand*{\doi}[1]{\href{http://dx.doi.org/#1}{doi: #1}}

\newlength\tindent
\setlength{\tindent}{\parindent}
\setlength{\parindent}{15pt}

\title{\vspace{-15mm}\fontsize{24pt}{10pt} \LARGE{Well-posedness for a regularised inertial Dean--Kawasaki model for slender
    particles in several space dimensions}} \small{} \date{}

\begin{document}
\author[1]{Federico Cornalba\footnote{\texttt{federico.cornalba@ist.ac.at}}}
\affil[1]{\normalsize{Institute of Science and Technology Austria}}
\author[2]{Tony Shardlow\footnote{\texttt{T.Shardlow@bath.ac.uk}}}
\author[3]{Johannes Zimmer\footnote{\texttt{jz@ma.tum.de}}}
\affil[2]{University of Bath, United Kingdom}
\affil[3]{Technische Universit\"at M\"unchen, Germany}

\maketitle 
\thispagestyle{empty}

\pagestyle{fancy} 

\renewenvironment{abstract}
 {\small
  \begin{center}
  \bfseries \abstractname\vspace{-0.0 pc}\vspace{0pt}
  \end{center}
  \list{}{
    \setlength{\leftmargin}{10mm}
    \setlength{\rightmargin}{\leftmargin}
  }
  \item\relax}
 {\endlist}

 \newenvironment{thanks}
 {  \list{}{
    \setlength{\leftmargin}{0mm}
    \setlength{\rightmargin}{\leftmargin}
  }
  \item\relax}
 {\endlist}

\vspace{-3 pc} 

\begin{abstract}
  A stochastic PDE, describing mesoscopic fluctuations in systems of weakly interacting inertial particles of finite volume, is
  proposed and analysed in any finite dimension $d\in\mathbb{N}$. It is a regularised and inertial version of the Dean--Kawasaki
  model. A high-probability well-posedness theory for this model is developed.  This theory improves significantly on the spatial
  scaling restrictions imposed in an earlier work of the same authors, which applied only to significantly larger particles in one
  dimension. The well-posedness theory now applies in $d$-dimensions when the particle-width $\ep$ is proportional to
  $N^{-1/\theta}$ for $\theta>2d$ and $N$ is the number of particles. This scaling is optimal in a certain Sobolev norm. Key tools
  of the analysis are fractional Sobolev spaces, sharp bounds on Bessel functions, separability of the regularisation in the
  $d$-spatial dimensions, and use of the Fa\`a di Bruno's formula.

  {\bfseries Key words}: Well-posedness of stochastic PDEs, multi-dimensional Dean--Kawasaki model, von Mises kernel, spatial
  regularisation, fractional Sobolev spaces, Bessel functions of first kind, mild solutions.
  
  {\bfseries AMS (MOS) Subject Classification}: 60H15 (35R60)
\end{abstract}

\section{Introduction}

Fluctuating hydrodynamics is a class of models describing fluctuations around the hydrodynamic limit of a many-particle system; a
particular example is the Dean--Kawasaki model~\cite{Dean1996a,Kawasaki1998a}, which describes the evolution of finitely many
particles governed by over-damped Langevin dynamics.  At its core, this model is a stochastic PDE for the empirical density,
comprising a diffusion equation that is stochastically perturbed by a mass-preserving multiplicative space-time white noise;
see~\eqref{eq:50} below.  Equations of fluctuating hydrodynamics are widely used in physics and other sciences (e.g., in the
description of active matter~\cite{Thompson2011a,Cates2015a}, thermal advection~\cite{Lutsko2012a}, neural
networks~\cite{Rotskoff2018a}, and agent based models~\cite{Donev2014a}), and are currently being investigated
numerically~\cite{Helfmann2019a}.  Still, the mathematical analysis of these equations is in its infancy.  A truly remarkable
recent result~\cite{Konarovskyi2019a} shows that a solution for the original Dean--Kawasaki model (as derived in~\cite{Dean1996a}
and given in~\eqref{eq:50} below) only exists when the initial datum is a superposition of a finite-number $N$ of Dirac delta
functions and the diffusion coefficient is $\frac 12 N$; if such an initial datum is ever so slightly mollified, then no solution
exists.  Given the numerous applications of equations of fluctuating hydrodynamics, this apparent mathematical instability is
particularly puzzling.

In light of this, several \emph{regularised} Dean--Kawasaki models (featuring smooth noise coefficient and coloured driving noise)
have been proposed and studied~\cite{Fehrman2019a,Donev2014a,Gess2016a,Cornalba2019a,Cornalba2020a}.  In recent
work~\cite{Cornalba2019a,Cornalba2020a}, the authors have derived and analysed stochastic PDE models for the empirical density of
$N$-particles following second-order Langevin dynamics and interacting weakly. The models are derived from particles as entities
of finite size rather than Dirac delta functions and this regularisation is crucial for the mathematical theory. We refer to this
PDE as the Regularised Inertial Dean--Kawasaki (RIDK) model.  In particular, we have established that RIDK has a well-defined mild
solution in one-dimension with probability converging to one in the limit as $N\to\infty$ and the particle width $\epsilon\to 0$,
subject to particles being wide enough (as given by the scaling condition $N\,\epsilon^\theta= 1$ for a given $\theta$). In this
paper, we establish well-posedness for RIDK in any finite spatial dimension and significantly improve the scaling condition (relax
conditions on $\theta$) in the one-dimensional case.  To the best of our knowledge, this is the first proof of well-posedness for
RIDK or any Dean--Kawasaki model in several space dimensions.

\subsection{Setting and main result}
\label{sec:Setting-main-result}
We consider $N$-weakly interacting particles on the $d$-dimensional torus $\mathbb{T}^d\coloneqq [0,2\pi)^d$. The particles are
  identified by position and momentum $(\f{q}_i,\f{p}_i)_{i=1}^{N}\in\mathbb{T}^d\times \mathbb{R}^d$, and satisfy the stochastic
  differential equation
\begin{align}
  \label{eq:19}
  &    \displaystyle \dot{\f{q}}_i= \f{p}_i,  \qquad
  \dot{\f{p}}_i= -\gamma\, \f{p}_i-N^{-1}\sum_{j=1}^{N}{\nabla U(\f{q}_i-\f{q}_j)}+\sigma\,\dot{\f{b}}_i,\qquad i=1,\dots,N,
\end{align}
where $\gamma,\sigma$ are positive constants, $U\colon\mathbb{T}^d\rightarrow \mathbb{R}$ is a smooth pairwise interaction
potential, and $\{\f{b}_i\}_{i=1}^{N}$ is a family of independent standard $d$-dimensional Brownian motions.  We work under the
key modelling assumption that the particles have a finite size.  Specifically, we describe their spatial occupancy by means of a
kernel $w_{\ep}\colon\mathbb{T}^d\rightarrow [0,\infty)$ indexed by $\ep>0$, which may be thought of as the particle width.  We
  propose RIDK as a model for the particle density and momentum density
   \[
     (\rho_{\ep}(\f{x},t),\f{j}_{\ep}(\f{x},t))
     \coloneqq  \pp{N^{-1}\sum_{i=1}^{N}{w_{\ep}(\f{x}-\f{q}_i(t))},
  N^{-1}\sum_{i=1}^{N}{\f{p}_i(t)\,w_{\ep}(\f{x}-\f{q}_i(t))}},
  \]
   where $(\f{x},t)\in\mathbb{T}^d\times[0,T]$. 

In particular, RIDK defines an approximate particle and momentum density
$(\tilde{\rho}_{\ep},\tilde{\f{\jmath}}_{\ep})\colon\mathbb{T}^d\times[0,T]\rightarrow \mathbb{R}\times\mathbb{R}^d$ by the
stochastic PDE
\begin{equation}
 \left\{\!
    \begin{array}{l}
      \partial_t \tilde{\rho}_{\ep} = -\nabla \cdot \tildebj_\ep,  \\
      \partial_t \tildebj_\ep= -\gamma \,
      \tildebj_{\ep}-\dfrac{\sigma^2}{2\gamma}\,\nabla\tilde{\rho}_{\ep}-\tilde{\rho}_{\ep}\,(\nabla U\ast\tilde{\rho}_{\ep})
      +\sigma\, N^{-1/2}\left(\sqrt{\tilde{\rho}_{\ep}}\,
      P^{1/2}_{\sqrt{2}\ep}\,\xi_{1},\dots,\sqrt{\tilde{\rho}_{\ep}}\,
      P^{1/2}_{\sqrt{2}\ep}\,\xi_{d}\right)\!,
    \end{array}
    \label{eq:23}
    \right.
\end{equation}
subject to $(\tilde{\rho}_\ep(\cdot,0),\tildebj_\ep(\cdot,0))=(\tilde{\rho}_0,\tildebj_0)$ for initial densities $\tilde{\rho}_0$
and $\tilde{\f{j}}_{0}$, where $\{\xi_{\ell}\}_{\ell=1}^{d}$ are independent space-time white noises, and $P_{\ep}$ is the
convolution operator $P_{\ep}\colon L^2(\mathbb{T}^d)\rightarrow L^2(\mathbb{T}^d)\colon f\mapsto
P_{\ep}\,f(\cdot)=\int_{\mathbb{T}^d}{w_{\ep}(\cdot-\f{y})\,f(\f{y})\,\m \f{y}}$.  The operator $P_{\ep}$ describes the spatial
correlation of the stochastic noise and is intrinsically linked to the spatial occupancy of the particles through the regularising
kernel $w_{\ep}$. This model is of inertial type (meaning that it keeps track of both density and momentum density), and is a
generalisation of the models studied in~\cite{Cornalba2019a,Cornalba2020a} to higher dimensions. For $w_\epsilon$, we choose the
von Mises kernel
\begin{align}
  \label{eq:2001}
  w_{\ep}(\f{x})\coloneqq Z_{\ep}^{-d}\exp\left\{-\frac{\sum_{\ell =
      1}^{d}{\sin^2(x_{\ell}/2)}}{\ep^2/2}\right\}, 
      \qquad
       Z_{\ep}\coloneqq
        \int_{\mathbb{T}} \exp\pp{-\frac{\sin^2(y/2)}{\ep^2/2}}\,\m y.
\end{align}
Any  non-negative function  $w\colon [-\pi,\pi)\to \mathbb{R}$ can be written $w(x)=
\exp(- V(\sin(x/2)))$ for a function $V\colon [-1,1]\to \mathbb{R}$. For $x\approx 0$, $V(\sin(x/2))\approx V(0)+V'(0) \sin(x/2)+\frac 12 V''(0) \sin^2(x/2)$.  Assuming  $V$ is symmetric for extending periodically, we find  that \[w(x)\approx \exp(-V(0)) \exp\pp{-\frac12 V''(0) \sin^2(x/2)}.\]
The values of $\exp(-V(0))=1/Z_\epsilon$ and $V''(0)=4/\epsilon^2$ are chosen so that the moments agree with $N(0,\epsilon^2)$ and there is convergence to the Dirac delta function. The periodic extension to $\mathbb{T}^d$ defines the von Mises kernel $w_\epsilon$.

For regularity purposes which will become clear later, it is convenient to replace the square-root in~\eqref{eq:23} with a smooth
function $h_{\delta}\colon \mathbb{R}\rightarrow \mathbb{R}$ such that $h_{\delta}(z)=\sqrt{|z|}$ for $|z|\geq \delta/2$, for some
small and fixed $\delta>0$. Following this change, the RIDK equation~\eqref{eq:23} is rewritten in the abstract stochastic PDE
notation
\begin{equation}
  \label{eq:25}
  \left\{\quad
    \begin{aligned}
      \m X_{\ep,\delta}(t)&=A\, X_{\ep,\delta}(t)\,\m t +\alpha_{U}(X_{\ep,\delta}(t))\,\m t+B_{N,\delta}(X_{\ep,\delta}(t))\,\m W_{\per,\ep}, \\
      X_{\ep,\delta}(0)&=X_0,
    \end{aligned}
  \right.
\end{equation}
where $X_{\ep,\delta}=(\tilde{\rho}_{\ep,\delta},\tildebj_{\ep,\delta})$, $X_0=(\tilde{\rho}_{0},\tildebj_0)$, $A$ is a linear
operator describing the deterministic drift excluding the interaction-potential term $\alpha_{U}(X_{\ep,\delta})$, $B_{N,\delta}$
is the stochastic integrand associated with the introduction of $h_{\delta}$, and $W_{\per,\ep}$ is a $Q$-Wiener representation of
the noise \( \left(P^{1/2}_{\sqrt{2}\ep}\,\xi_{1},\dots,P^{1/2}_{\sqrt{2}\ep}\,\xi_{d}\right) \).  More details
concerning~\eqref{eq:25}, as well as a sketch of its derivation from the Langevin particle dynamics~\eqref{eq:19}, are given in
Section~\ref{s:3}.
 
Throughout the paper, we work under the general scaling
\begin{align}
  \label{eq:2000}
  N\,\ep^{\theta}=1,\qquad \theta>\theta_0 \coloneqq 2d.
\end{align}

From a modelling point of view,~\eqref{eq:2000} imposes the particle size (comparatively to $N$), where increasing $\theta$
implies increasing particle size. If $\theta$ is close to the limiting case $\theta_0=2d$, the scaling~\eqref{eq:2000} is
approximately only dependent on the volume $v$ of each particle: specifically, since each particle is roughly of size $\ep$ in
each direction, then~\eqref{eq:2000} corresponds to $Nv^2\approx 1$. The purpose of the scaling is regularisation and smoothing of
the densities, and it is necessary that particles overlap in the limit so that conservation of volume is not possible. The
condition~\eqref{eq:2000} provides sufficient regularity for the Sobolev space analysis in Theorem~\ref{thm:10} and is optimal in
that sense as we show in Remark~\ref{rem:120}.

From an analytical perspective,~\eqref{eq:2000} affects the spectral properties of the noise
$W_{\per,\ep}$ in~\eqref{eq:25} through the operator $P_{\ep}$.

We state the main result of this paper.

\begin{theorem}[Well-posedness of RIDK on $\mathbb{T}^d$]
  \label{thm:10}
  Let $\delta>0$, $h_{\delta}\in C^{\lceil d/2 \rceil+2}(\mathbb{R})$, $\nu\in(0,1)$, and $U\in C^1$. Fix $\theta>\theta_0=2d$
  such that $(\theta-\theta_0)/2<\lceil (d+1)/2 \rceil-d/2$. Pick $\eta\in(0,\min\{(\theta-\theta_0)/2, C(d)\})$ for some small
  enough $C(d)\in(0,1/2)$ (see Lemma~\ref{gagl}). Set $s\coloneqq d/2+\eta$.  Let $(\tilde{\rho}_0,\tildebj_0)$ be a deterministic
  initial condition belonging to the fractional Sobolev space $\mathcal{W}^s\coloneqq H^s(\mathbb{T}^d)\times
  [H^s(\mathbb{T}^d)]^d$ such that $\min_{\f{x}\in\mathbb{T}^d}\tilde{\rho}_0(\f{x})> \delta$.
  
  There exists $T=T(\tilde{\rho}_0)$, a large enough $N$, a unique $\mathcal{W}^s$-valued process
  $X_{\ep,\delta}=(\tilde{\rho}_{\ep,\delta},\tildebj_{\ep,\delta})$, and a set $F_{\nu}$ of probability at least $1-\nu$ such
  that $\min_{\f{x}\in\mathbb{T}^d,s\in[0,T]}\tilde{\rho}_{\ep,\delta}(\f{x},s)\geq \delta$ on $F_{\nu}$, and $X_{\ep,\delta}$
  solves~\eqref{eq:25} pathwise on $F_{\nu}$ in the sense of mild solutions~\cite[Chapter 7]{Da-Prato2014a}. As a consequence,
  $X_{\ep,\delta}$ also solves the RIDK equations~\eqref{eq:23} pathwise on $F_{\nu}$ in the sense of mild solutions.
\end{theorem}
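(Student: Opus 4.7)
The plan is to solve~\eqref{eq:25} by the standard contraction-mapping argument for mild solutions of semilinear SPDEs~\cite[Chapter 7]{Da-Prato2014a}, carried out in $C([0,T];\mathcal{W}^s)$, after truncating the nonlinearities outside a ball $B_R\subset \mathcal{W}^s$. The space $\mathcal{W}^s$ with $s=d/2+\eta$ is natural here: the Sobolev embedding $H^s(\mathbb{T}^d)\hookrightarrow C^0(\mathbb{T}^d)$ turns $H^s$ into a Banach algebra and makes the pointwise minimum $\min_{\f x}\tilde{\rho}_{\ep,\delta}(\f x,\cdot)$ depend continuously on the $H^s$-path, so that the event $\{\min \tilde{\rho} \geq \delta\}$ is measurable and, for small $T$, close to a full-probability event. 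The set $F_\nu$ in the statement will be built as the intersection of this event with the event that the truncation at level $R$ is never activated.

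The core of the work lies in three estimates. First, the drift $\alpha_U$ contains the quadratic term $\tilde\rho\,(\nabla U\ast \tilde\rho)$; using $U\in C^1$ and the Banach-algebra property of $H^s$ one obtains a local Lipschitz bound of the form $\|\alpha_U(X)-\alpha_U(Y)\|_{\mathcal{W}^s}\lesssim (1+\|X\|_{\mathcal{W}^s}+\|Y\|_{\mathcal{W}^s})\|X-Y\|_{\mathcal{W}^s}$. Second, the stochastic coefficient $B_{N,\delta}$ involves the composition $h_\delta(\tilde\rho)$: to control it in $H^s$ I would apply the Fa\`a di Bruno formula to expand integer-order derivatives $\partial^\alpha h_\delta(\tilde\rho)$ as polynomials in derivatives of $\tilde\rho$ with coefficients $h_\delta^{(k)}(\tilde\rho)$, and then close the fractional part $\eta$ by fractional-Sobolev composition estimates. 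The hypothesis $h_\delta\in C^{\lceil d/2\rceil+2}$ is tailored so that enough derivatives of $h_\delta$ are available, while the upper bound $(\theta-\theta_0)/2<\lceil(d+1)/2\rceil-d/2$ keeps the multi-index sum in the Fa\`a di Bruno expansion under control. Third, and most critically, $B_{N,\delta}(X)$ must define a Hilbert--Schmidt operator from the Cameron--Martin space of $W_{\per,\ep}$ into $H^s$. By the separable product structure of~\eqref{eq:2001}, the Fourier coefficients $\hat{w}_{\sqrt{2}\ep}(\f k)$ factor as $\prod_{\ell=1}^d \hat{w}^{(1)}_{\sqrt{2}\ep}(k_\ell)$, each one-dimensional factor being identifiable with a ratio of modified Bessel functions of the first kind. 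Sharp asymptotics for these Bessel ratios, combined with the scaling $N\ep^\theta=1$ and $\theta>2d$, yield the finiteness of $N^{-1}\sum_{\f k\in\mathbb{Z}^d} (1+|\f k|^2)^s\,\hat w_{\sqrt{2}\ep}(\f k)$, which is exactly the Hilbert--Schmidt condition needed to integrate stochastically in $H^s$. Continuous $H^s$-valued paths for the stochastic convolution then follow from the factorisation method.

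Granted the three estimates, Banach's fixed point theorem produces a unique mild solution $X^R\in C([0,T];\mathcal{W}^s)$ to the truncated SPDE. A moment bound on the stochastic convolution together with continuity of $t\mapsto e^{tA}X_0$ in $\mathcal{W}^s$, and the assumption $\min_{\f x}\tilde\rho_0>\delta$, show that for $T=T(\tilde\rho_0)$ small and $N$ large one may choose $R$ so that the event $F_\nu=\{\sup_{t\le T}\|X^R(t)\|_{\mathcal{W}^s}<R\}\cap\{\min_{\f x,t}\tilde\rho^R\geq\delta\}$ has probability at least $1-\nu$. On $F_\nu$ the truncation is inactive, so $X^R$ solves~\eqref{eq:25}; and because $h_\delta(z)=\sqrt{|z|}$ whenever $|z|\geq\delta/2$, the same pathwise solution solves the original RIDK equation~\eqref{eq:23}.

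I expect the principal obstacle to be the Hilbert--Schmidt bound of Step 3. One must extract, from the explicit but delicate Bessel-function form of the Fourier coefficients of $w_\ep$, a quantitative decay rate that is simultaneously (i) sharp enough to accommodate a Sobolev exponent $s>d/2$, as required for the $H^s\hookrightarrow C^0$ embedding and for handling $h_\delta(\tilde\rho)$ via Fa\`a di Bruno, and (ii) loose enough to tolerate the factor $N^{-1}$ in the covariance precisely when $N\ep^\theta=1$ with $\theta>2d$. This interplay between fractional Sobolev regularity, nonlinearity through $h_\delta$, and the spectral tail of $P_{\sqrt{2}\ep}$ is what forces, and is optimal at, the threshold $\theta_0=2d$.
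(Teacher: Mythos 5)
Your proposal follows essentially the same route as the paper: the functional setting in $\mathcal{W}^s$ with $s=d/2+\eta$ and the embedding $H^s\hookrightarrow C^0$; control of $h_\delta(\tilde\rho)$ in $H^s$ via Fa\`a di Bruno plus fractional Sobolev arguments; the Hilbert--Schmidt bound on the noise from Bessel-function asymptotics for the spectrum of $P_{\sqrt2\ep}$, together with the scaling $N\ep^\theta=1$, $\theta>2d$; and a localisation (truncation/stopping time) to convert local Lipschitz and local boundedness into a fixed-point argument. The one place where you diverge is in how the set $F_\nu$ is produced: you compare the truncated solution $X^R$ to $S(t)X_0$ and exploit smallness of $T$ to make both the Duhamel drift integral ($\lesssim T R^2$) and the stochastic convolution ($\lesssim N^{-1/2}\ep^{-(2s+d)/2}$) small perturbations, whereas the paper first solves the full noise-free equation including the interaction term, obtaining a deterministic $Z_\delta$ with $\min\rho_Z>\delta$ on $[0,T]$, and then uses a Gronwall argument on $\|X_{\ep,\delta}-Z_\delta\|_{\mathcal{W}^s}$ up to the stopping time $\tau_{\delta,k}$. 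The paper's comparison lets $T$ be taken as large as the positivity interval of the deterministic wave dynamics rather than requiring $T\cdot R^2$ itself to be tiny; your variant is more elementary (no Gronwall) but asks a little more of $T$. Both yield the statement, since it only asserts existence of \emph{some} $T=T(\tilde\rho_0)$.
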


The proof exploits a small-noise analysis, by obtaining the solution to~\eqref{eq:25} as a small perturbation of the strictly
positive solution of the noise-free dynamics (i.e., the damped wave equation). When the perturbations are small and the initial
data is everywhere larger than $\delta$, the solution to~\eqref{eq:25} remains outside the regularisation regime
$(-\infty,\delta/2)$ for $h_\delta$ and the regularisation is bypassed, resulting in a well-defined solution of~\eqref{eq:23}. The
$C^0$-norm is used to measure the perturbations and keep track of whether the solution falls into the regularisation region. To do
this, the parameter $s$ is chosen so the mild solutions take values in the Sobolev space $\mathcal{W}^s$, which is embedded
continuously in $C^0\times [C^0]^d$.

With this in mind, the proof of Theorem~\ref{thm:10} (see Section~\ref{ss:20}) is built upon three conceptual blocks, developed in
Section~\ref{s:2}. Firstly, $A$ is proven to generate a $C_0$-semigroup with respect to the $\mathcal{W}^s$-norm (see
Subsection~\ref{ss:1}, Lemma~\ref{lem:80}). Secondly, the stochastic integrand $B_{N,\delta}$ is shown to be locally Lipschitz and
sublinear ($d=1$) or locally Lipschitz and locally bounded ($d>1$) with respect to the Hilbert--Schmidt
$L^0_2(\mathcal{W}^s)$-norm (Subsection~\ref{ss:3}, Lemma~\ref{lem:7}). These two blocks give rigour to the application of the
mild solution theory. Thirdly, sharp bounds for the trace of $W_{\per,\ep}$ with respect to the $\mathcal{W}^s$-norm are provided
via spectral analysis of $P_{\sqrt{2}\ep}$ (see Subsection~\ref{ss:2}, Lemma~\ref{lem:1}). In combination with Lemma~\ref{lem:7},
this guarantees the vanishing-noise regime for~\eqref{eq:25} in the $\mathcal{W}^s$-norm as $N\rightarrow \infty$.

Theorem~\ref{thm:10} carries two significant contributions. Firstly, it provides a well-posedness theory for the multi-dimensional
RIDK model; to the best of our knowledge, this is the first paper to give an existence and uniqueness theory for such a model.
Secondly, it improves an analogous one-dimensional result~\cite{Cornalba2019a,Cornalba2020a} by significantly relaxing the scaling
threshold in~\eqref{eq:2000} from $\theta_0=7$ to $\theta_0=2$. The more restrictive threshold for $\theta$ resulted from a
suboptimal analysis with respect to the $\mathcal{W}^1$ norm. The $\theta_0=7$ scaling is inconveniently restrictive, as it only
allows for rather large particles (comparatively to $N$). Specifically, $\theta$ is significantly away from the null value, which
\emph{formally} corresponds to representing particles by Dirac delta functions.

The main technical novelties that we introduce in the proof of Theorem~\ref{thm:10} are the following. First, we deploy improved
estimates for the spectral properties of $P_{\sqrt{2}\ep}$, which rely on refined bounds for modified Bessel functions of the
first kind. Secondly, we set the analysis in the `least restrictive' Sobolev space $\mathcal{W}^s$ that embeds continuously in the
space of continuous functions, and this corresponds to considering $s=d/2+\eta$ for arbitrarily small positive $\eta$. Thirdly, we
extend the analysis to higher dimensions by relying on the separability of the kernel $w_{\ep}$ in its $d$ variables, the boundedness and periodicity of the spatial domain, and the
considerations from the one-dimensional case. The boundedness of the spatial domain is crucially used also in the derivation of technical tools related to fractional Sobolev spaces and Fa\`a di Bruno's formula, which are deferred to Appendix~\ref{app:b}. Relevant elementary algebraic tools are summarised in
Appendix~\ref{app:a}. Additionally, the proof of Theorem~\ref{thm:10} in Section~\ref{ss:20} is finalised with a localisation
procedure argument. Crucially, the same techniques adopted to deal with the superlinear interaction $\alpha_{U}$ (analogous to
those developed in~\cite[Section 4]{Cornalba2020a}) also allow to deal with the locally bounded noise in $d>1$.

\begin{rem}
  \label{rem:120}
  The justification of the scaling assumptions of Theorem~\ref{thm:10} is found in Lemma~\ref{lem:1}. There, each index $s$ is
  associated to a relevant value $\theta_{c}(s)\coloneqq 2s+d$, and the trace of $W_{\per,\ep}$ with respect to the
  $\mathcal{W}^s$-norm is bounded by $\ep^{-\theta_c(s)}$. In combination with Lemma~\ref{lem:7}, this implies that the
  $\mathcal{W}^{s}$-norm of the stochastic noise of~\eqref{eq:25} vanishes as $N\rightarrow \infty$ for any $\theta >
  \theta_c(s)$. As our well-posedness theory relies on the embedding $\mathcal{W}^s\subset C^0\times [C^0]^d$, we require the
  equality $s=d/2+\eta=(\theta_c(s)-d)/2$ to hold, giving $\theta>2d+2\eta$. As $\eta$ may be chosen arbitrarily small, we obtain
  the threshold $\theta_0=2d$.

  Furthermore, for each $s$, the value $\theta_c(s)$ is optimal, in the sense that $\theta_c(s)$ is also the minimum value for
  which $\mean{\|\rho_{\ep}(\cdot,t)\|^2_{H^s}}$ (where, we recall, $\rho_{\ep}$ denotes the true particle density) is uniformly
  bounded in $N$ and $\ep$, at least in the case of independent particles given by $U\equiv 0$. Namely, it is easy to proceed as
  in~\cite{Cornalba2019a} and argue that, under reasonable assumptions on the law of the particle dynamics,
  \begin{equation*}
    0<C_1<\lim_{N\to\infty,\,\ep\to 0}\left\{N\,\ep^{2s+d}\,\mean{\|\rho_{\ep}(\cdot,t)\|^2_{H^s}}\right\}<C_2,
  \end{equation*} 
  for some constants $C_1$, $C_2$ independent of $N$ and $\ep$. Crucially, we obtain scaling agreement for fluctuations on
  microscopic and mesoscopic scale; here \emph{microscopic} means particle-level dynamics, see~\eqref{eq:19} above, while
  \emph{mesoscopic} means the Dean--Kawasaki dynamics~\eqref{eq:23}. As a result, the value $\theta_0$ is also optimal, as
  $\lim_{s\rightarrow d/2}\theta_c(s)=\theta_0$.  We stress that this notion of scaling optimality is only understood with respect
  to the evaluation of Sobolev norms for the densities $\rho_\ep$ and $\tilde{\rho}_{\ep}$.
  
\end{rem}

\begin{rem}
  The RIDK model~\eqref{eq:23} may be regarded as the regularised inertial analogue of the original over-damped Dean--Kawasaki
  model~\cite{Dean1996a,Kawasaki1998a}
  \begin{align}\label{eq:50}
    \partial_t \rho_N=\frac{N}{2}\,\Delta \rho_N+\nabla\cdot\left(\sqrt{\rho_N}\,\xi\right)\quad
    \mbox{in }\mathbb{R}^d\times (0,\infty),
  \end{align}
  where $\xi$ is a space-time white noise. As stated before,~\eqref{eq:50} admits nothing but an atomic solution, and only in the
  integer regime $N\in\mathbb{N}$. In this case, the solution is $\rho_N(\f{x},t)=N^{-1}\sum_{i=1}^{N}{\delta(\f{x}-\f{B}_i(t))}$,
  where $(\f{B}_i)_{i=1}^{N}$ are independent Brownian walkers.

We consider the RIDK model~\eqref{eq:23} instead of~\eqref{eq:50} for a number of reasons. Firstly, while both models describe
mesoscopic fluctuations in particle systems of physical relevance (such as those treated, e.g., in the description of active
matter~\cite{Thompson2011a,Cates2015a} and thermal advection~\cite{Lutsko2012a}), the RIDK model does so while also capturing core
inertial effects (i.e., Newton's law of motion).  Secondly, it bypasses any problematic interpretations arising from taking a
formal divergence of the stochastic noise. Finally, it allows to work with smooth rather than atomic solutions. 
\end{rem}

\subsection{Comparison with classical over-damped Dean--Kawasaki model and open problems}

A rigorous connection between~\eqref{eq:23} and~\eqref{eq:50} is, to the present day, still lacking.  Firstly, it is not at all
clear if~\eqref{eq:50} can be recovered via an over-damped limit (i.e., by taking $\gamma\rightarrow \infty$) in~\eqref{eq:23}: to
the best of our knowledge, there are no known mathematical results in this context, and, additionally,~\eqref{eq:23} admits smooth
solutions while~\eqref{eq:50} only admits atomic solutions.  Secondly, we have no clear indication as to how to recover the square
root singularity in~\eqref{eq:23} (i.e., how to perform the limit $\delta\rightarrow 0$), even in the context of strictly positive
solution to the noise-free dynamics of~\eqref{eq:23} considered in the paper.  The closest result on the subject is given
in~\cite{Fehrman2019a}. The macroscopic limit $N\rightarrow \infty$ (which, in the case of~\eqref{eq:23}, also forces the removal
of the regularisation $\ep\rightarrow 0$ due to the scaling~\eqref{eq:2000}) is, on the contrary, better understood. The solution
to~\eqref{eq:23} converges to the solution of a noiseless wave equation as a consequence of Theorem~\ref{thm:10}; on the other
hand, the solution to~\eqref{eq:50} converges to the solution of a deterministic parabolic equation, at least for reasonable
initial configurations (this follows from the definition of $\rho_N$ and the law of large numbers).

In the case $\theta\leq 2d$ (currently out of the scope of our well-posedness theory), or indeed for any other scaling of $N$ and
$\ep$, the over-damped limit in~\eqref{eq:23} is just as open a question as for the case $\theta>2d$, while the macroscopic limit
$N\rightarrow \infty$ is unknown.

We now briefly turn to possible future improvements of our RIDK model. The derivation of the RIDK model~\eqref{eq:23} heavily
relies on boundedness and periodicity of the spatial domain $\mathbb{T}^d$. In this case, the spectrum of the convolution operator
$P_\ep$ is known: as explained in Section \ref{s:3}, this is a consequence of the one-dimensional analysis treated
in~\cite{Cornalba2020a}, of suitable multiplication rules for the kernel $w_\ep$, and of its separability in the $d$ variables on
$\mathbb{T}^d$.

The analysis of the RIDK model takes the spectral properties of $P_\ep$ merely as starting points, and, therefore, is a relatively
independent and self-contained argument. It is reasonable to expect that it could be extended to general bounded domains (and, if
applicable, to different boundary conditions) by adapting the analysis of the spectrum of $P_\ep$. Improvements on the scaling
requirement~\eqref{eq:2000} will likely come from using less restrictive notions of solutions.  Questions such as the long time
behaviour of solutions and the invariant measures for~\eqref{eq:23} are hard to answer, and will likely require a radically
different approach.

\subsection{Basic notation}
\label{ss:0}

We work with periodic functions on the $d$-dimensional torus $\mathbb{T}^d=[0,2\pi)^d$ for $d\in\mathbb{N}$. We never specify the
dependence of any function space on $d$, as this is always clear from the context. Bold face characters always denote vectors. For
$m\in\mathbb{N}_0$ and $p\in[1,\infty]$, we denote by $W^{m,p}$ the standard Sobolev space of periodic functions on $\mathbb{T}^d$
with derivatives up to order $m$ belonging to $L^p$. For $0<s\notin \mathbb{N}$ and $p\in[1,\infty)$, we define the fractional
spaces $W^{s,p}$ via the norm
\begin{equation}
  \label{eq:100}
  \|u\|_{W^{s,p}}\coloneqq \|u\|_{W^{\lfloor s\rfloor,p}}+\max_{|\f{z}|=\lfloor s\rfloor}\int_{\mathbb{T}^d}{\int_{\mathbb{T}^d}{\frac{|\partial^{\f{z}}
        u(\f{x})-\partial^{\f{z}}u(\f{y})|^p}
      {|\f{x}-\f{y}|^{d+(s-\lfloor s\rfloor)p}}\,\m \f{x}}\,\m \f{y}},
\end{equation} 
where $\lfloor s\rfloor\coloneqq \max\{n\in\mathbb{N}_0\colon n\leq s\}$. We also set $\lceil y\rceil\coloneqq
\min\{n\in\mathbb{N}:y\leq n \}$. We consider the fractional Hilbert spaces $H^s$ and $\f{H}^s\coloneqq \left[H^s\right]^d$
identified by the Fourier-type inner products
\begin{align}
  \label{eq:101}
  \langle u,v\rangle_{H^s} & \coloneqq \sum_{\f{j}\in\mathbb{Z}^d}{\hat{u}_{\f{j}}\,\overline{\hat{v}_{\f{j}}}\,(1+|\f{j}|^2)^s},\qquad
  \hat{u}_{\f{j}}\coloneqq (2\pi)^{-d}\int_{\mathbb{T}^d}{e^{-\mathrm{i}\,\f{j}\cdot \f{x}}\,u(\f{x})\,\m \f{x}},\qquad u,v\in H^s,\nonumber\\
  \langle \f{u},\f{v}\rangle_{\f{H}^s} & \coloneqq  \sum_{\ell=1}^{d}{\langle u_{\ell},v_{\ell}\rangle_{H^s}},\qquad \f{u},\f{v}\in \f{H}^s,
\end{align}
and we define the norm on $\mathcal{W}^s\coloneqq H^s\times \f{H}^s$ as $\|(u,\f{v})\|_{\mathcal{W}^s}\coloneqq
(\|u\|^2_{H^s}+\|\f{v}\|^2_{\f{H}^s})^{1/2}$, for $(u,\f{v})\in\mathcal{W}^s$. The norms $\|\cdot\|_{H^s}$ and
$\|\cdot\|_{W^{s,2}}$ are equivalent; see~\cite[Proposition 1.3]{Benyi2013a}. We define the space $ \f{V}^{s+1}\coloneqq
\left\{\f{v}\in\f{H}^s\colon \nabla\cdot\f{v}\in H^s\right\}\supset \f{H}^{s+1}$, and recall the integration-by-parts formula
\begin{align}
  \label{eq:103}
  \langle -\nabla\cdot \f{v},u\rangle_{H^s}=\langle \f{v},\nabla u\rangle_{\f{H}^s},\qquad\forall u\in H^{s+1},\forall\f{v}\in \f{V}^{s+1}.
\end{align}
In dimension $d=1$, we trivially have $V^{s+1}\equiv H^{s+1}$. We denote by $L(\mathcal{W}^s)$ (respectively,
$L^0_2(\mathcal{W}^s)$) the set of continuous linear functionals mapping $\mathcal{W}^s$ into itself (respectively, the set of
Hilbert--Schmidt operators from $P^{1/2}_{\sqrt{2}\ep}\mathcal{W}^s\subset\mathcal{W}^s$ into $\mathcal{W}^s$), with the
convolution operator $P_{\sqrt{2}\ep}$ as defined after~\eqref{eq:23}.

For each $\alpha\in\mathbb{N}$, we define
\begin{align*}
  \Pi_{\alpha} & \coloneqq \left\{\mbox{set of partitions of }\{1,\dots,\alpha\}\right\},\\
  B(\pi) & \coloneqq  \left\{\mbox{set of blocks forming partition }\pi\right\},\qquad \pi\in \Pi_\alpha,\\
  |\pi| & \coloneqq  \#B(\pi)=\text{number of blocks forming partition $\pi$},
\end{align*}
where $\#$ denotes the number of elements in a set.  Furthermore, for every partition $\pi\in\Pi_{\alpha}$, we set
\begin{align*}
  \beta_j(\pi) & \coloneqq  \#\{b\in B(\pi)\colon |b|=j\},\qquad j\in\{1,\dots,\alpha\},\\
  J(\pi) & \coloneqq  \{j\in\{1,\dots,\alpha \}\colon \beta_j(\pi)>0\}.
\end{align*}
As an immediate consequence of the definitions, we have $\sum_{j\in J(\pi)}{j\,\beta_j(\pi)}=\alpha$.

We use $C$ as a generic constant whose value may change from line to line (with dependence on relevant parameters highlighted
whenever necessary, for example $C(s)$). In addition, we denote the embedding constant of $H^s\subset C^0$ by $\K$. Finally, we
use the subscript notation to link specific constants with the lemmas where they are defined; for example, $K_{\ref{prod}}$ is the
constant introduced in Lemma~\ref{prod}.

\section{Derivation of RIDK}
\label{s:3}
We now derive the RIDK model~\eqref{eq:25} by following the methodology outlined in~\cite{Cornalba2020a}. Consider the
second-order Langevin system~\eqref{eq:19}, as well as the quantities
$(\rho_{\ep}(\f{x},t),\f{j}_{\ep}(\f{x},t))=(N^{-1}\sum_{i=1}^{N}{w_{\ep}(\f{x}-\f{q}_i(t))},
N^{-1}\sum_{i=1}^{N}{\f{p}_i(t)w_{\ep}(\f{x}-\f{q}_i(t))})$ defined via the kernel~\eqref{eq:2001}.  Simple It\^o computations
imply that $\rho_{\ep}$ and $\f{j}_{\ep}$ satisfy the system
\begin{equation}
  \label{eq:20}
  \left\{
    \begin{array}{l}
           \displaystyle \partial_t \rho_{\ep}(\f{x},t)  = -\nabla \cdot \f{j}_\ep(\f{x},t),  \\
      \displaystyle\partial_t \f{j}_\ep(\f{x},t)  = -\gamma\, 
      \f{j}_{\ep}(\f{x},t) -\f{j}_{2,\ep}(\f{x},t) + \f{I}_{U}(\f{x},t)
      +\f{\dot{Z}}_N(\f{x},t) ,
    \end{array}
  \right.
\end{equation}
where the $\ell$th component of terms on the right are defined by
\begin{align*}
  [\f{j}_{2,\ep}(\f{x},t) ]_\ell & \coloneqq  N^{-1}\sum_{i=1}^{N}{p_{\ell,i}^2(t)\,\partial_{x_\ell}w_{\ep}(\f{x}-\f{q}_i(t))}
  +N^{-1}\sum_{i=1}^{N}{\sum_{k\neq \ell}{p_{\ell,i}(t)\,p_{k,i}(t)\,\partial_{x_\ell}w_{\ep}(\f{x}-\f{q}_i(t))}},\\
  [\f{I}_{U}(\f{x},t)]_{\ell} & \coloneqq  -N^{-1}\sum_{i=1}^{N}{N^{-1}\sum_{j=1}^{N}{\partial_{x_\ell}U(\f{q}_i(t)
      -\f{q}_j(t))}\,w_{\ep}(\f{x}-\f{q}_i(t))},\nonumber\\
  \left[\f{\dot{Z}}_N(\f{x},t)\right]_{\ell} & \coloneqq  \sigma N^{-1}\sum_{i=1}^{N}{w_{\ep}(\f{x}-\f{q}_i(t))\,\dot b_{\ell,i}}.\nonumber
\end{align*}
The terms $\f{j}_{2,\ep},\f{I}_{U}$, and $\f{\dot{Z}}_N$ are not closed in the leading quantities $(\rho_{\ep},\f{j}_{\ep})$, and  approximations are used to close the system of equations. We now sketch how the approximations in~\cite{Cornalba2019a,Cornalba2020a} extend to the
multi-dimensional case.

The term $\f{j}_{2,\ep}$ is dealt with under a local-equilibrium assumption~\cite[Corollary 3.2]{Duong2018b}. In this situation,
the probability density function of $(\f{q}_i(t),\f{p}_i(t))$ is approximately separable in the position variable $\f{q}_i(t)$ and
momentum variable $\f{p}_i(t)$ due to the structure of the Gibbs invariant measure. In addition, the momentum variable is
distributed according to a Gaussian of mean zero and diagonal covariance matrix $(\sigma^{2}/2\gamma)\,I_d$. Furthermore, under
the additional assumption $\sigma^2\ll 2\gamma$, the approximation $\sigma^2/(2\gamma)=\mathbb{E}[p^2_{\ell,i}(t)]\approx
p^2_{\ell,i}(t)$ is legitimate. All these considerations imply that
$\mean{\f{j}_{2,\ep}}\approx\sigma^2/(2\gamma)\,\mean{\nabla\rho_{\ep}}$ and this motivates the replacement $\f{j}_{2,\ep}\approx
(\sigma^2/2\gamma)\,\nabla\rho_{\ep}$.

The interaction term $\f{I}_{U}$ may be approximated as $\f{I}_{U}\approx -\rho_{\ep}\,(\nabla U\ast\rho_{\ep})$, following the
lines of~\cite[Proposition 3.5]{Cornalba2020a}.

Finally, one may substitute the noise $\f{\dot{Z}}_N(\f{x},t) $ with $\f{\dot{Y}}_N(\f{x},t) $, where
\begin{equation}
  \label{eq:1000}
  \left[\f{\dot{Y}}_N(\f{x},t) \right]_{\ell} \coloneqq  \sigma\, N^{-1/2}\,\sqrt{\rho_{\ep}(\f{x},t)}\,
  P^{1/2}_{\sqrt{2}\ep}\,\xi_{\ell}(\f{x},t) ,
\end{equation}
where $P_{\ep}$ is the convolution operator $P_{\ep}\colon L^2(\mathbb{T}^d)\rightarrow L^2(\mathbb{T}^d)\colon f\mapsto
P_{\ep}f(\cdot)=\int_{\mathbb{T}^d}{w_{\ep}(\cdot-\f{y})\,f(\f{y})\,\m \f{y}}$ introduced above, and where
$\{\xi_{\ell}\}_{\ell=1}^{d}$ are independent space-time white noises. The substitution of $\f{\dot{Z}}_N(\f{x},t) $ with
$\f{\dot{Y}}_N(\f{x},t)$ relies on the two noises being approximately equivalent in distribution. This is a consequence of the following approximate multiplication rule for von Mises kernels
\begin{align}\label{eq:5000}
w_{\ep}(\f{x}_1-\f{q}_i(t))w_{\ep}(\f{x}_2-\f{q}_i(t))\approx w_{\sqrt{2}\ep}(\f{x}_1-\f{x}_2)w_{\ep/\sqrt{2}}\left(\frac{\f{x}_1+\f{x}_2}{2}-\f{q}_i(t)\right),
\end{align}
where $\f{x}_1,\f{x}_2\in\mathbb{T}^d$, which can be deduced from its one-dimensional analogue~\cite{Cornalba2020a} thanks to the crucial fact that the  kernel $w_{\ep}$ is separable in its $d$ variables on $\mathbb{T}^d$.

In addition, the
stochastic independence of the $d$ components of each member of the family $\{\f{b}_i\}_{i=1}^{N}$ is reflected in the stochastic
independence of the $\{\xi_{\ell}\}_{\ell=1}^d$.  Taking all into account, we obtain our multi-dimensional RIDK
system~\eqref{eq:23}.

The noise $\f{\dot{Y}}_N(\f{x},t) $ can be explicitly expanded using the spectral properties of the operator $P_{\ep}$, which, due
to the separability of the kernel $w_{\ep}$, are readily available from the one-dimensional case~\cite[Section
  4.2]{Cornalba2019a}. More specifically, with $\{e_j\}_{j\in\mathbb{Z}}$ being the trigonometric system
\begin{align*}
 e_{j}(x) \coloneqq 
  \begin{cases}
    \sqrt{\frac{1}{\pi}}\cos(jx), & \mbox{if } j>0, \vspace{0.3 pc}\\
    \sqrt{\frac{1}{\pi}}\sin(jx), & \mbox{if } j<0, \vspace{0.3 pc}\\
    \sqrt{\frac{1}{2\pi}}, & \mbox{if } j=0,
  \end{cases}                   
\end{align*}
it is not difficult to see that the family $\{f_{\f{j},s}\}_{\f{j}\in\mathbb{Z}^d}$ defined as
\begin{align}
  \label{eq:411}
  f_{\f{j},s}(\f{x}) \coloneqq  
    C(d)\left\{\prod_{\ell=1}^{d}{e_{j_{\ell}}(x_{\ell})}\right\}\left(1+|\f{j}|^2\right)^{-s/2}, \quad \f{j}\in\mathbb{Z}^d,
\end{align}
is, for some choice of normalisation constant $C(d)$, an $H^s$-orthonormal basis of eigenfunctions for $P_{\sqrt{2}\ep}$ for any
$\ep>0$. Furthermore, the eigenvalue of $P_{\sqrt{2}\ep}$ corresponding to the eigenfunction $f_{\f{j},s}$ is
\begin{equation}
  \label{ev-prod}
  \lambda_{\f{j},\ep}  =\prod_{\ell=1}^{d}{\lambda_{j_{\ell},\ep}},
\end{equation}
where the eigenvalues from the one-dimensional case are given by
\begin{align}
  \label{eq:2005}
  \lambda_{j,\ep} 
  & =
    \begin{cases}
      \displaystyle Z^{-1}_{\sqrt{2}\ep}\int_{\mathbb{T}}{e^{-\frac{\sin^2(x/2)}{\ep^2}}\cos(jx)
      \m x}=
   I_{j}\left(\{2\ep^2\}^{-1}\right) / I_0(\Bp{2\epsilon^2}^{-1} ), 
      & \mbox{if } j\neq 0, \vspace{0.3 pc}\\
      1, & \mbox{if } j=0,
    \end{cases}
\end{align}
with $I_j$ denoting the modified Bessel function of first kind and order $j$~\cite[Eq.~(9.6.19)]{Abramowitz1964a}. As a result,
the stochastic process
\begin{align}
  \label{eq:unreg-noise}
  W_{\per,\ep} & \coloneqq \sum_{\f{j}\in\mathbb{Z}^d}{\sqrt{\alpha_{\f{j},s,\ep}}\,(0,f_{\f{j},s},0,\dots,0)}\,\beta_{1,\f{j}}+\cdots\nonumber\\
  & \quad + \sum_{\f{j}\in\mathbb{Z}^d}
  {\sqrt{\alpha_{\f{j},s,\ep}}\,(0,\dots,0,f_{\f{j},s})}\,\beta_{d,\f{j}},\qquad \alpha_{\f{j},s,\ep}
  \coloneqq (1+|\f{j}|^2)^{s}\lambda_{\f{j},\ep},
\end{align}
with \iid families $\{\beta_{\ell,\f{j}}\}_{\ell=1}^{d}$ of independent Brownian motions, is a $\mathcal{W}^s$-valued $Q$-Wiener
process representation of the $\mathbb{R}\times\mathbb{R}^d$-valued stochastic noise $(0,\f{\dot{Y}}_N(\f{x},t))$. It follows
that, upon swapping $\f{\dot{Z}}_N(\f{x},t) $ with $\f{\dot{Y}}_N(\f{x},t) $, we can write~\eqref{eq:20} in the abstract
stochastic PDE form
\begin{equation}
  \label{eq:24}
  \left\{\quad
    \begin{aligned}      \m X_{\ep}(t)&=A\,X_{\ep}(t)\,\m t+\alpha_{U}(X_{\ep,\delta}(t))\,\m t+B_{N}(X_{\ep}(t))\,\m W_{\per,\ep}, \\
      X_{\ep}(0)&=X_0,
    \end{aligned}
  \right.
\end{equation}
where $X_{\ep}=(\tilde{\rho}_{\ep},\tildebj_{\ep})$, $A$ is the wave-type differential operator given by
\[
  A\,X
  \coloneqq \pp{-\nabla\cdot\f{j},
  \,-\gamma \,{\f{j}}
  -(\sigma^2/2\gamma)\,\nabla{\rho} },\qquad 
  X=(\rho,\f{j}),
\]
the interaction potential is $\alpha_{U}(X_{\ep,\delta})\coloneqq -\tilde{\rho}_{\ep}\,(\nabla U\ast\tilde{\rho}_{\ep})$, and the
stochastic integrand $B_N$ is given by
\begin{align*}
  B_{N}({\rho},\f{j})(a,\f{b})
  \coloneqq \sigma\, N^{-1/2}\left(0,\, \sqrt{{\rho}}\,b_1,\dots,
  \sqrt{\rho}\,b_d\right).
\end{align*}
For some $h_{\delta}\in C^{\lceil d/2 \rceil+2}(\mathbb{R})$ regularising the square function in $[0,\delta]$, we substitute $B_N$
with the smoothed stochastic integrand
\begin{align}
  \label{eq:reg-noise}
  B_{N,\delta}(({\rho},\f{j}))(a,\f{b})\coloneqq \sigma \,N^{-1/2}\left(0,\, h_\delta({\rho})\,b_1,
  \dots,h_\delta({\rho})\,b_d\right)
\end{align}
in~\eqref{eq:24}, and we finally obtain the following equation in
$X_{\ep,\delta}=(\tilde{\rho}_{\ep,\delta},\tildebj_{\ep,\delta})$
\begin{equation*}
  \left\{\quad
    \begin{aligned}
      \m X_{\ep,\delta}(t)&=A\,X_{\ep,\delta}(t)\,\m t +\alpha_{U}(X_{\ep,\delta}(t))\,\m t+B_{N,\delta}(X_{\ep,\delta}(t))\,\m W_{\per,\ep}, \\
      X_{\ep,\delta}(0)&=X_0,
    \end{aligned}
  \right.
\end{equation*}
which is exactly~\eqref{eq:25}.

\section{Main technical results for the proof of Theorem~\ref{thm:10}}\label{s:2}
We develop the three main technical tools upon which we base the main proof in Section~\ref{ss:20}. We investigate the cases $d=1$
and $d>1$ separately.

\subsection{Semigroup analysis of operator \texorpdfstring{$A$ in $\mathcal{W}^s$}{}}
\label{ss:1}
\begin{lemma}
  \label{lem:80}
  Let $\mathcal{D}(A)\coloneqq H^{s+1}\times\f{V}^{s+1}$. The operator $A\colon \mathcal{D}(A)\subset \mathcal{W}^s\rightarrow
  \mathcal{W}^s$ defines a $C_0$-semigroup of contractions.
\end{lemma}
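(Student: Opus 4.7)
The plan is to apply the Lumer--Phillips theorem. As a preliminary step, I would equip $\mathcal{W}^s$ with the equivalent inner product
\[
\langle (u,\f{v}),(u',\f{v}')\rangle_\ast \coloneqq \tfrac{\sigma^2}{2\gamma}\langle u,u'\rangle_{H^s}+\langle \f{v},\f{v}'\rangle_{\f{H}^s},
\]
which rescales the density component so that the conservative wave-type part of $A$ becomes manifestly skew-symmetric. Density of $\mathcal{D}(A) = H^{s+1}\times\f{V}^{s+1}$ in $\mathcal{W}^s$ follows immediately by truncation of Fourier series.

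For dissipativity in $\|\cdot\|_\ast$, I would compute $\langle AX,X\rangle_\ast$ for $X = (\rho,\f{j})\in\mathcal{D}(A)$. Applying the integration-by-parts identity~\eqref{eq:103} yields
\[
\langle AX,X\rangle_\ast = \tfrac{\sigma^2}{2\gamma}\langle -\nabla\cdot\f{j},\rho\rangle_{H^s}-\gamma\|\f{j}\|_{\f{H}^s}^2-\tfrac{\sigma^2}{2\gamma}\langle \nabla\rho,\f{j}\rangle_{\f{H}^s} = -\gamma\|\f{j}\|_{\f{H}^s}^2\le 0,
\]
because the two cross terms cancel exactly; this is the familiar energy identity for a damped wave system.

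For the range condition, I would fix any $\lambda>0$ and solve $(\lambda I-A)(\rho,\f{j}) = (f,\f{g})$ for arbitrary $(f,\f{g})\in\mathcal{W}^s$ by expanding in Fourier series on $\mathbb{T}^d$. Mode by mode, eliminating $\hat{\f{j}}_{\f{k}}$ yields
\[
\hat{\rho}_{\f{k}} = \frac{(\lambda+\gamma)\hat{f}_{\f{k}}-\mathrm{i}\,\f{k}\cdot\hat{\f{g}}_{\f{k}}}{\lambda(\lambda+\gamma)+(\sigma^2/2\gamma)|\f{k}|^2},
\]
and the denominator is bounded below by a positive multiple of $1+|\f{k}|^2$. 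This built-in gain of one derivative places $\rho$ in $H^{s+1}$; the recovery formula $\hat{\f{j}}_{\f{k}} = (\lambda+\gamma)^{-1}(\hat{\f{g}}_{\f{k}}-\mathrm{i}\,(\sigma^2/2\gamma)\,\f{k}\,\hat{\rho}_{\f{k}})$ places $\f{j}$ in $\f{H}^s$; and the identity $\nabla\cdot\f{j} = f-\lambda\rho \in H^s$ completes membership in $\f{V}^{s+1}$.

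With dissipativity and a nonempty range for some $\lambda>0$, Lumer--Phillips delivers the required $C_0$-semigroup of contractions in $\|\cdot\|_\ast$, which is equivalent to $\|\cdot\|_{\mathcal{W}^s}$. The only mildly delicate step is the bookkeeping of the weighted norm and the extraction of Sobolev regularity from the explicit Fourier formulas; since the torus geometry makes all spectral computations explicit, no serious obstacle is expected.
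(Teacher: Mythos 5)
Your proof is correct and reaches the same conclusion by a parallel route. The paper verifies the hypotheses of the Hille--Yosida theorem directly (closedness, dense domain, resolvent estimate $\|A_\lambda^{-1}\|\leq\lambda^{-1}$), normalising $\sigma^2/(2\gamma)=1$ to make the cross terms cancel in the plain $\mathcal{W}^s$-inner product; you instead invoke Lumer--Phillips, absorbing the coefficient into a weighted inner product $\langle\cdot,\cdot\rangle_\ast$ so that dissipativity $\langle AX,X\rangle_\ast=-\gamma\|\f{j}\|_{\f{H}^s}^2$ holds for arbitrary $\sigma,\gamma$, and then check the range condition by the same Fourier calculation. Since the two generation theorems are equivalent and both proofs hinge on exactly the same two ingredients (the integration-by-parts cancellation~\eqref{eq:103} and the mode-by-mode solvability of the resolvent equation on $\mathbb{T}^d$), this is essentially a repackaging rather than a different method. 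What your version buys is a cleaner treatment of the general coefficient: the weighted norm makes the skew part of $A$ visibly skew rather than relying on a ``WLOG $\sigma^2/2\gamma=1$'' reduction, and Lumer--Phillips bundles injectivity, boundedness of the inverse, and closedness of $A$ into the single surjectivity check, so you never need the paper's separate Step~1 (closedness) or the separate density argument for $\mathrm{Dom}(A_\lambda^{-1})$. The one thing worth being a little careful about — and the paper's own proof shares this caveat — is that the semigroup you produce is contractive in $\|\cdot\|_\ast$, hence merely bounded (uniformly equivalent to contractive) in the unweighted $\|\cdot\|_{\mathcal{W}^s}$; for the application in Theorem~\ref{thm:10} this distinction is immaterial.
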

\begin{proof}[Proof of Lemma~\ref{lem:80} in dimension $d=1$]
The proof is identical to the one provided in~\cite[Lemma 4.2]{Cornalba2019a}, simply with all relevant spaces $H^{\alpha}$ being
replaced by $H^{\alpha-1+s}$. We assume $\sigma^2/(2\gamma)\coloneqq 1$ for simplicity, even though the proof is analogous for the
general case $\sigma^2/(2\gamma)>0$.

We verify the assumptions of the Hille--Yosida Theorem, as stated in~\cite[Theorem 3.1]{Pazy1983a}.

\emph{Step 1: $A$ is a closed operator, and $\mathcal{D}(A)$ is dense in $\mathcal{W}^s$.} This is easily checked.

\emph{Step 2: The resolvent set of $A$ contains the positive half line.} Let $\lambda>0$. We show that the operator
$A_{\lambda}\coloneqq A-\lambda I$ is injective. Assume that $A_{\lambda}(\rho,j)=(0,0)$. We take the $H^s$-inner product of the
first component of $A_{\lambda}(\rho,j)$ with $\rho$ and of the second component of $A_{\lambda}(\rho,j)$ with $j$, and we obtain
\begin{equation*}
 0= \langle-j'-\lambda\,\rho,\rho\rangle_{H^s}+ \langle-(\lambda+\gamma)\,j-\rho',j\rangle_{H^s}=-\lambda\,\|\rho\|^2_{H^s}
  -(\lambda+\gamma)\,\|j\|_{H^s}^2,
\end{equation*}
where we have used~\eqref{eq:103}.  Since $\lambda,\gamma>0$, we deduce that $(\rho,j)=(0,0)$.  We now show that
$A^{-1}_{\lambda}$ is a bounded operator. Consider $A^{-1}_{\lambda}(a,b)=(\rho,j)$. This implies
\begin{align}
  \lambda \rho & =-a-j',\label{eq:400}\\
  (\lambda+\gamma)\,j & = -b-\rho'.\label{eq:401}
\end{align}
Taking the $H^s$-inner product of~\eqref{eq:400} (respectively, of~\eqref{eq:401}) with $\rho$ (respectively, with $j$), we get
\begin{align}
  \label{eq:310}
  \lambda\,\|(\rho,j)\|^2_{\mathcal{W}^s}\leq \lambda\,\|\rho\|^2_{H^{s}}+(\lambda+\gamma)\,\|j\|^2_{H^{s}}
  =\langle -a,\rho\, \rangle_{H^s} + \langle -b,j \rangle_{H^s}.
\end{align}
We use the Cauchy--Schwartz and Young inequalities to deduce $\|A^{-1}_{\lambda}\|_{\mathcal{L}(\mathcal{W}^s,\mathcal{W}^s)}\leq
\lambda^{-1}$, hence the boundedness of $A^{-1}_{\lambda}$. We now show that $\mbox{Dom}(A^{-1}_{\lambda})$ is dense in
$\mathcal{W}^s$. Let us fix $(a,b)\in H^{s}\times H^{s+1}$. The system of equations $A_{\lambda}(\rho,j)=(a,b)$ reads
\begin{align*}
  -j'-\lambda\,\rho = a, \qquad -(\lambda+\gamma)\,j-\rho' = b,
\end{align*}
which promptly gives 
\begin{align}
  \label{eq:315}
  \frac{\rho''}{\lambda+\gamma}-\lambda\,\rho=a-\frac{b'}{\lambda+\gamma}\in H^{s}.
\end{align}
A Fourier series expansion argument provides existence of a unique solution $\rho\in H^{s+2}$ for~\eqref{eq:315}. From
$-(\lambda+\gamma)\,j=\rho'+b$, we immediately deduce that $j\in H^{s+1}$. We have shown that, for every $(a,b)$ in the dense
subset $H^{s}\times H^{s+1}\subset\mathcal{W}^s$, the operator $A^{-1}_{\lambda}$ is well-defined.

\emph{Step 3: Inequality~\cite[(3.1)]{Pazy1983a} is satisfied:} This is precisely
$\|A^{-1}_{\lambda}\|_{\mathcal{L}(\mathcal{W}^s,\mathcal{W}^s)}\leq \lambda^{-1}$, which we already proved.
\end{proof}

\begin{proof}[Proof of Lemma~\ref{lem:80} in dimension $d>1$]
Steps 2 and 3 are readily adapted, as the Fourier analysis is unchanged. We only need need to justify the validity of Step 1. As
for the density of $\mathcal{D}(A)$ in $\mathcal{W}^s$, this is implied by the density of $H^{s+1}$ into $H^{s}$ and $\f{H}^{s+1}$
into $\f{H}^s$, as well as by the inclusion $\f{H}^{s+1}\subset\f{V}^{s+1}$. As for the closedness of the operator $A$, this
follows from the consistency of the first component of $A$ and of the definition of $\f{V}^{s+1}$. More specifically, consider a
sequence $\mathcal{D}(A)\ni(\rho_n,\f{j}_n)\rightarrow (\rho,\f{j})$ in $\mathcal{W}^s$, such that $A\,(\rho_n,\f{j}_n)\rightarrow
(x,\f{y})$ in $\mathcal{W}^s$. This immediately implies that $\nabla\cdot \f{j}_n$ converges in $H^{s-1}$ to both $-x$ and
$\nabla\cdot\f{j}$, forcing them to agree. In particular, $\f{j}\in \f{V}^{s+1}$. Similarly, $\nabla\rho_n$ converges in
$\f{H}^{s-1}$ to both $\nabla\rho$ and $-\gamma\,\f{j}-\f{y}$, forcing them to agree. In particular, $\rho\in H^{s+1}$. Therefore,
$(\rho,\f{j})\in \mathcal{D}(A)$ and $A\,(\rho,\f{j})=(x,\f{y})$.
\end{proof}

\subsection{Improved bounds on trace \texorpdfstring{of $W_{\per,\ep}$ in $\mathcal{W}^s$-norm}{}}%
\label{ss:2}

\begin{lemma}
  \label{lem:1}
  Let $\{\lambda_{\ep,\f{j}}\}_{\f{j}\in\mathbb{Z}^d}$ be the eigenvalues of $P_{\sqrt{2}\ep}$, see~\eqref{ev-prod}
  and~\eqref{eq:2005}. Let $\alpha\in(0,1)$ and $\beta\in(0,1)$ such that $\alpha+\beta\geq 1$, and let $s\geq 0$.
  \begin{enumerate}
  \item\label{lem:1i} The following bound holds
  \begin{align}
    \label{eq:1}
    \sum_{\f{j}\in\mathbb{Z}^d}{\{\lambda_{\f{j},\ep}\}(1+|\f{j}|^2)^{s}}\leq
    C(s,d)\left\{\ep^{-2\beta(2s+1)}+\ep^{-2\alpha(2s+1)}+\ep^{-2\alpha-4\beta s}\right\}\ep^{-(d-1)}.
  \end{align}
  \item\label{lem:1ii} The right-hand side of~\eqref{eq:1} is minimised, among all admissible pairs $(\alpha,\beta)$, by choosing
    $(\alpha,\beta)=(1/2,1/2)$. In this case, the right-hand side of~\eqref{eq:1} is proportional to $\ep^{-\theta_c(s)}$, where
    $\theta_c(s)=2s+d$ was introduced in Remark~\ref{rem:120}.
    \end{enumerate}
\end{lemma}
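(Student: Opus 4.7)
The plan is to reduce the $d$-dimensional trace to one-dimensional sums via the product structure \eqref{ev-prod} of the eigenvalues, and then to control those one-dimensional sums using the refined bounds on the Bessel ratios $\lambda_{j,\ep}=I_j(1/(2\ep^2))/I_0(1/(2\ep^2))$ advertised as a technical novelty in the introduction.

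First, I would use the elementary inequality $(1+|\f{j}|^2)^s \leq C(d,s)\bigl(1+\sum_{\ell=1}^d (1+j_\ell^2)^s\bigr)$, valid for $s\geq 0$ (subadditivity of $x\mapsto x^s$ when $s\leq 1$, Jensen/power-mean when $s>1$). Combined with $\lambda_{\f{j},\ep}=\prod_{\ell=1}^d \lambda_{j_\ell,\ep}$, inserting this upper bound and exploiting symmetry in $\ell$ factorises the trace into one-dimensional sums:
\[
\sum_{\f{j}\in\mathbb{Z}^d}\lambda_{\f{j},\ep}(1+|\f{j}|^2)^{s}
\leq C(d,s)\bigl(S_0^{d}+d\,S_s\,S_0^{d-1}\bigr),
\]
where $S_0\coloneqq\sum_{j\in\mathbb{Z}}\lambda_{j,\ep}$ and $S_s\coloneqq\sum_{j\in\mathbb{Z}}\lambda_{j,\ep}(1+j^2)^{s}$. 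The prefactor $\ep^{-(d-1)}$ in \eqref{eq:1} will emerge from $S_0^{d-1}$ once the auxiliary estimate $S_0\leq C\ep^{-1}$ is in hand.

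The core one-dimensional task is then to prove the parametric estimate
\[
S_s\leq C(s)\bigl(\ep^{-2\alpha(2s+1)}+\ep^{-2\beta(2s+1)}+\ep^{-2\alpha-4\beta s}\bigr).
\]
My strategy is to split $\mathbb{Z}$ at a frequency scale $|j|\sim\ep^{-2\alpha}$. On the low-frequency piece, the trivial bound $\lambda_{j,\ep}\leq 1$ yields a contribution of order $\ep^{-2\alpha(2s+1)}$. On the tail $|j|>\ep^{-2\alpha}$, I would invoke the sharp asymptotic decay of $I_j/I_0$ (which decays faster than any polynomial in $j$ as soon as $j\ep\gg 1$) and interpolate against the trivial bound using the second exponent $\beta$; the constraint $\alpha+\beta\geq 1$ is precisely what guarantees convergence of the resulting tail series, and the interpolation produces the two remaining terms $\ep^{-2\beta(2s+1)}$ (pure Bessel decay) and $\ep^{-2\alpha-4\beta s}$ (cross term). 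Applying this estimate at $\alpha=\beta=1/2$, in which case all three exponents collapse to $1$, furnishes $S_0\leq C\ep^{-1}$, hence the $\ep^{-(d-1)}$ factor.

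Finally, for part (ii), the right-hand side of \eqref{eq:1} is minimised over the feasibility region when the three exponents $2\alpha(2s+1),\,2\beta(2s+1),\,2\alpha+4\beta s$ coincide. Equality of the first two forces $\alpha=\beta$, under which the third identity $2\alpha+4\alpha s=2\alpha(2s+1)$ holds automatically; the constraint $\alpha+\beta\geq 1$ then forces $\alpha=\beta\geq 1/2$, and the common exponent is itself minimised at $\alpha=\beta=1/2$. This yields $\ep^{-(2s+1)}\cdot\ep^{-(d-1)}=\ep^{-(2s+d)}=\ep^{-\theta_c(s)}$, as required. The main obstacle I expect is the parametric one-dimensional estimate on $S_s$: the splitting is conceptually simple, but producing the three terms with the stated exponents requires the sharp, quantitative control of $I_j/I_0$ listed as a key technical novelty of the paper; once that 1D estimate is secured, the multi-dimensional reduction and the optimisation are routine.
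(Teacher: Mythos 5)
Your reduction for $d>1$ is exactly the paper's: factor $\lambda_{\f{j},\ep}=\prod_\ell\lambda_{j_\ell,\ep}$, bound $(1+|\f{j}|^2)^s\leq C(s,d)\sum_\ell(1+j_\ell^2)^s$, and separate variables to arrive at $C(s,d)\,S_s\,S_0^{d-1}$ (your extra $S_0^d$ term is harmless since $S_0\leq S_s$). Your low/high frequency split and the claim that $S_0\lesssim\ep^{-1}$ feeds the $\ep^{-(d-1)}$ prefactor also match, and your part~(ii) optimisation (set the three exponents equal, use $\alpha+\beta\geq 1$ to force $\alpha=\beta\geq 1/2$, then push to $\alpha=\beta=1/2$) is the intended argument. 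One cosmetic remark: you split at $|j|\sim\ep^{-2\alpha}$ and attribute the low-frequency contribution to $\ep^{-2\alpha(2s+1)}$, whereas the paper splits at $|j|\sim C\ep^{-2\beta}$, uses $\beta$ for the threshold and $\alpha$ for the geometric decay rate $(1-x^{-\alpha})$, so in the paper's labelling the low-frequency piece is $\ep^{-2\beta(2s+1)}$ and the cross term is $\ep^{-2\alpha-4\beta s}$. Pushed through, your labelling would yield the cross term $\ep^{-2\beta-4\alpha s}$ instead; this still implies the stated inequality after relabelling $(\alpha,\beta)$, since the hypotheses are symmetric, but it is worth keeping the labels straight.

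The genuine gap is the one you yourself flag: the heart of the lemma is the one-dimensional estimate for $S_s$, and you leave that as a black box (``the sharp, quantitative control of $I_j/I_0$ \ldots once that 1D estimate is secured''). The paper supplies it in two precise pieces. First (Step~1), the uniform bound $\lambda_{j,\ep}\leq K$ independent of $\ep$ (not merely $\lambda_{j,\ep}\leq 1$; the eigenvalues are ratios $I_j/I_0$ and monotonicity in $j$ gives uniform boundedness). Second (Step~2), N{\aa}sell's rational bound $I_{j+1}(x)/I_j(x)<x/(j+1/2+x)$, combined with elementary algebra, yields the quantitative geometric-decay statement: for $\alpha+\beta\geq 1$, there exist $C$ and $\bar x$ so that $j\geq Cx^\beta$ and $x\geq\bar x$ imply $\lambda_{j+1,\ep}/\lambda_{j,\ep}\leq 1-x^{-\alpha}$. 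This is what converts the tail into a geometric series with ratio $1-x^{-\alpha}$, whose weighted sum is then controlled via polylogarithm bounds to produce exactly the terms $x^{\alpha(2s+1)}$ and $x^\alpha(Cx^\beta)^{2s}$. Your ``interpolate against the trivial bound using $\beta$'' gestures at the right outcome but does not constitute a proof of the geometric decay, and without the explicit $1-x^{-\alpha}$ rate the three exponents in \eqref{eq:1} cannot be derived. To complete your argument you need to cite or reprove the N{\aa}sell-type bound on the Bessel ratio and carry out the geometric-series computation.
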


\begin{proof}[Proof of Lemma~\ref{lem:1} in dimension $d=1$]
We denote by $I_j(x)$ the $j$-th modified Bessel function of the first kind evaluated at $x$.

\emph{Step 1}. There exists $K>0$ such that, for any $j$ and any $\ep$, it holds $\lambda_{\ep,j}<K$.  This follows
from~\eqref{eq:2005} together with the monotonicity of $\{\lambda_{\ep,j}\}_j$ (see~\cite[Introduction]{Nasell1978a}).

\emph{Step 2}. Let $x\geq 1$. Picking $k=2$ and $m=0$ in~\cite[Theorem 2, bound (a)]{Nasell1978a}, we have
\begin{align}
  \label{eq:bes-mon}
  \frac{I_{j+1}(x)}{I_j(x)}<\frac{x}{j+1/2+x}.
\end{align}
We show that the inequality 
\begin{align}
  \label{eq:2}
  \frac{x}{j+1/2+x}\leq 1-\frac{1}{x^\alpha}
\end{align} 
holds when
\begin{align}
  j & \geq Cx^\beta,\label{eq:3}\\
  x & \geq \overline{x}=\overline{x}(\alpha,\beta)>0,\label{eq:4}
\end{align}
for suitable $\overline{x}(\alpha,\beta)>0$ and $C>0$ to be discussed below. Simple algebraic rearrangements imply
that~\eqref{eq:2} is equivalent to
\begin{align}
  \label{eq:5}
  0\leq j(x^{\alpha}-1)+\frac{1}{2}x^{\alpha}-x-\frac{1}{2},
\end{align}
which is in turn satisfied (taking~\eqref{eq:3} into account), at least under the sufficient condition
\begin{align}
  \label{eq:6}
  0\leq Cx^{\alpha+\beta}-Cx^{\beta}+\frac{1}{2}x^{\alpha}-x-\frac{1}{2}.
\end{align}
Take $C>0$ in~\eqref{eq:3} if $\alpha+\beta>1$, otherwise take $C>1$ if $\alpha+\beta=1$. Then, for $x$ large enough (i.e., for
$\overline{x}$ large enough in~\eqref{eq:4}), inequality~\eqref{eq:6} is satisfied, and therefore so is inequality~\eqref{eq:2}.

\emph{Step 3}. By symmetry of $\lambda_{\ep,j}$ with respect to $j$, seen in~\eqref{eq:2005}, we only need consider non-negative
indexes $j$. We define $A_1\coloneqq \{0,1,2,\dots,\lceil Cx^\beta\rceil\}$ and $A_2\coloneqq \mathbb{N}_0\setminus A_1$. We split
the sum in the left-hand side of~\eqref{eq:1} over these two sets. We use \emph{Step 1} to deduce
\begin{align}
  \label{eq:8}
  \sum_{j=0}^{\lceil Cx^\beta\rceil}{\{\lambda_{\ep,j}\}(1+j^2)^{s}} & \leq K\sum_{j=0}^{\lceil Cx^\beta\rceil}{(1+j^2)^{s}}
  \leq K\sum_{j=0}^{\lceil Cx^\beta\rceil}{(1+\lceil Cx^\beta\rceil^2)^{s}} \leq K\sum_{j=0}^{\lceil Cx^\beta\rceil}
       {(1+\lceil Cx^\beta\rceil)^{2s}}\nonumber\\
       & = K(1+\lceil Cx^\beta\rceil)^{2s+1} \leq C(s)K(\lceil Cx^\beta\rceil)^{2s+1}.
\end{align}
For the sum over $A_2$, we use the geometric decay $\lambda_{\ep,j+1}\leq (1-1/x^{\alpha})\lambda_{\ep,j}$, which is implied
by~\eqref{eq:bes-mon} and~\eqref{eq:2} combined with~\eqref{eq:2005}. We use \emph{Step 1} to obtain
\begin{align}
  & \sum_{j=\lceil Cx^\beta\rceil+1}^{\infty}{\{\lambda_{\ep,j}\}(1+j^2)^{s}} \nonumber\\
  & \quad \leq C(s)K\sum_{j=\lceil Cx^\beta\rceil+1}^{\infty}{\left(1-\frac{1}{x^{\alpha}}\right)^{j-(\lceil Cx^\beta\rceil+1)}j^{2s}}
  = C(s) K\sum_{j=0}^{\infty}{\left(1-\frac{1}{x^{\alpha}}\right)^{j}\left(j+\lceil Cx^\beta\rceil+1\right)^{2s}}\nonumber\\
  & \quad \leq C(s)K\sum_{j=0}^{\infty}{\left(1-\frac{1}{x^{\alpha}}\right)^{j}j^{2s}} + C(s)K\sum_{j=0}^{\infty}
  {\left(1-\frac{1}{x^{\alpha}}\right)^{j}\left(\lceil Cx^\beta\rceil+1\right)^{2s}}\nonumber\\
  & \quad\leq C(s)K\left(\frac{1}{x^{\alpha}}\right)^{-(2s+1)}+C(s)K x^{\alpha}\left(\lceil Cx^\beta\rceil+1\right)^{2s},\label{eq:7}
\end{align}
where we have also used estimates on the polylogarithmic function $\mbox{Li}_{\gamma}(z)\coloneqq
\sum_{j=1}^{\infty}{z^{j}j^{-\gamma}}$ for the first term in the last line, namely
\begin{align}
  \label{eq:17}
  \mbox{Li}_{\gamma}(z)\leq \frac{C(s)}{(1-z)^{-\gamma+1}}.
\end{align}
In our case, $\gamma=-2s$. Inequality~\eqref{eq:17} applies for negative integers $\gamma$ as a simple consequence of
differentiation of the geometric power series. Furthermore,~\eqref{eq:17} also applies for negative non-integers $\gamma$,
provided that $z\in(1-\nu,1)$ for some small $\nu=\nu(s)$. This is a consequence of the trivial bound $\mbox{Li}_{\gamma}(z)\leq
\sum_{j=0}^{\infty}{z^j(1+j)^{-\gamma}}$, and of~\cite[(9.550) and (9.557)]{Gradshteyn2007a}.

As in the case of~\cite[Lemma 4.3]{Cornalba2019a}, we pick $x\coloneqq (2\ep^2)^{-1}$, with $\ep$ small enough so
that~\eqref{eq:4} holds (and that $z=1-1/x^{\alpha}\in(1-\nu,1)$, with this requirement only demanded if $\gamma=-2s\notin
\mathbb{Z}$).  Combining~\eqref{eq:8} and~\eqref{eq:7} gives~\eqref{eq:1}.

Finally, it is easy to see that the choice $(\alpha,\beta)=(1/2,1/2)$, which makes the right-hand side of~\eqref{eq:1}
proportional to $\ep^{-\theta_c(s)}=\ep^{-(2s+d)}$, also minimises it among all admissible pairs $(\alpha,\beta)$.
\end{proof}

\begin{proof}[Proof of Lemma~\ref{lem:1} in dimension $d>1$]
  The result promptly follows from the bound
  \begin{align*}
    & \sum_{\f{j}\in\mathbb{Z}^d}{\{\lambda_{\f{j},\ep}\}(1+|\f{j}|^2)^{s}}
    = \sum_{\f{j}\in\mathbb{Z}^d}{\prod_{\ell=1}^{d}{\lambda_{j_{\ell},\ep}}\pp{1+\sum_{k=1}^{d}{j_k^2}}^{s}} \\
    & \quad \leq C(s)\sum_{k=1}^{d}{\sum_{\f{j}\in\mathbb{Z}^d}{\prod_{\ell=1}^{d}{\lambda_{j_{\ell},\ep}}(1+j_k^2)^{s}}}
    \leq C(s)\sum_{k=1}^{d}{\sum_{j_k\in\mathbb{Z}}{\lambda_{j_k,\ep}(1+j_k^2)^s}\sum_{j_\ell\in\mathbb{Z},\,\ell\neq k}
      {\prod_{\ell=1,\ell\neq k}^{d}{\lambda_{j_{\ell},\ep}}}}\\
    & \quad = C(s,d)\pp{\sum_{\smash{j\in\mathbb{Z}}}{\lambda_{j,\ep}(1+j^2)^s} }\left(\sum_{\smash{j\in\mathbb{Z}}} {\lambda_{j,\ep}}\right)^{d-1}
  \end{align*}
  and the validity of~\eqref{eq:1} for $d=1$. The optimality of the scaling under $(\alpha,\beta)=(1/2,1/2)$ has already been
  dealt with in the one-dimensional case.
\end{proof}

\begin{rem}
For $d=1$, we have improved the scaling of~\cite[Lemma 4.3]{Cornalba2019a} in two points. Firstly, the bound on
$\{\lambda_{j,\ep}\}_j$ is now uniform in $\ep$ and $j$ (i.e., we no longer bound $\lambda_{j,\ep}$ using $\ep^{-1}$). Secondly,
the exponential decay of the eigenvalues `kicks in' earlier, namely around $C\ep^{-2\beta}$ rather than around $\ep^{-2}$. This
leads to a sharper estimate concerning the sum on the region $A_1$.

These improvements bring the threshold $\theta_0$ down from $7$ to $3$ for the suboptimal choice $s=1$ (see~\cite[Lemma
  4.3]{Cornalba2019a}). In addition, the switch to fractional Sobolev spaces, i.e., the choice $s=1/2+\eta$ instead of $s=1$ as
in~\cite{Cornalba2019a}, where $\eta$ can be chosen arbitrarily small, grants a further decrease of $\theta_0$ from $3$ to $2$.
\end{rem}

\subsection{Regularity of the stochastic integrand \texorpdfstring{$B_{N,\delta}$}{}}
\label{ss:3}
\begin{lemma}
  \label{lem:7}
  With the same notation as in Theorem~\ref{thm:10}, let $s=d/2+\eta$, where $\eta>0$ is such that $\eta <C(d)<1/2$, where $C(d)$
  is small enough (see Lemma~\ref{gagl}). Then
  \begin{enumerate}
  \item\label{it:reg-1} $B_{N,\delta}$ is a map from $\mathcal{W}^s$ to $L(\mathcal{W}^s)$.
  \item\label{it:reg-2} $B_{N,\delta}$ is \emph{locally Lipschitz} with respect to the $L^0_2(\mathcal{W}^s)$-norm.
  \item\label{it:reg-3} $B_{N,\delta}$ is \emph{sublinear} with respect to the $L^0_2(\mathcal{W}^s)$-norm if $d=1$, and
    \emph{locally bounded} with respect to the same norm if $d>1$.
  \end{enumerate}
\end{lemma}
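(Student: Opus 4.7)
The plan is to reduce the Hilbert--Schmidt estimates to bounds on $\|h_\delta(\rho)\|_{H^s}$ and its increments, which are then controlled via Fa\`a di Bruno's formula combined with fractional Sobolev product estimates and the trace bound from Lemma~\ref{lem:1}. Item~\ref{it:reg-1} is essentially a corollary of the algebra property of $H^s$: since $s = d/2+\eta > d/2$, the space $H^s$ is a Banach algebra, so for any fixed $(\rho,\f{j})\in\mathcal{W}^s$ the map $(a,\f{b})\mapsto \sigma N^{-1/2}(0,h_\delta(\rho)\,b_1,\dots,h_\delta(\rho)\,b_d)$ is linear and continuous on $\mathcal{W}^s$, with operator norm controlled by $\|h_\delta(\rho)\|_{H^s}$.

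For items~\ref{it:reg-2} and~\ref{it:reg-3}, I would expand $B_{N,\delta}(\rho,\f{j})$ in the $H^s$-orthonormal eigenbasis $\{f_{\f{j},s}\}$ of $P_{\sqrt{2}\ep}$ from~\eqref{eq:411} and exploit the $Q$-Wiener representation~\eqref{eq:unreg-noise}. Combined with the algebra estimate $\|h_\delta(\rho)\,f_{\f{k},s}\|_{H^s}\leq C\,\|h_\delta(\rho)\|_{H^s}$, the Hilbert--Schmidt norm factorises as
\begin{align*}
  \|B_{N,\delta}(\rho,\f{j})\|_{L^0_2(\mathcal{W}^s)}^2
  &\leq C\,\sigma^2\,N^{-1}\,\|h_\delta(\rho)\|_{H^s}^2
  \sum_{\f{k}\in\mathbb{Z}^d}(1+|\f{k}|^2)^s\,\lambda_{\f{k},\ep},
\end{align*}
which by Lemma~\ref{lem:1} is bounded by $C\,N^{-1}\,\ep^{-\theta_c(s)}\,\|h_\delta(\rho)\|_{H^s}^2$ and so vanishes as $N\to\infty$ under~\eqref{eq:2000}. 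The identical computation applied to $B_{N,\delta}(\rho_1,\f{j}_1) - B_{N,\delta}(\rho_2,\f{j}_2)$ reduces local Lipschitz continuity to a local Lipschitz estimate for the composition map $\rho\mapsto h_\delta(\rho)$ in the $H^s$-norm.

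The substantive analytic task is therefore to control this composition operator. For $d=1$ one has $\lfloor s\rfloor = 0$, so only the Gagliardo seminorm contributes, and the uniform Lipschitz property of $h_\delta$ yields directly $\|h_\delta(\rho)\|_{H^s}\leq C(1+\|\rho\|_{H^s})$ together with $\|h_\delta(\rho_1)-h_\delta(\rho_2)\|_{H^s}\leq C\,\|\rho_1-\rho_2\|_{H^s}$, giving sublinearity and the Lipschitz estimate at once. For $d>1$ one has $\lfloor s\rfloor\geq 1$, and I would apply Fa\`a di Bruno in the form
\[
  \partial^{\f{z}}h_\delta(\rho) = \sum_{\pi\in\Pi_{|\f{z}|}} c_\pi\,h_\delta^{(|\pi|)}(\rho)\prod_{b\in B(\pi)}\partial^{\f{z}_b}\rho,
\]
combined with fractional Kato--Ponce-type product estimates and the embedding $H^s\hookrightarrow C^0$ (to obtain uniform pointwise bounds on $\rho$ and hence on $h_\delta^{(|\pi|)}(\rho)$). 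This yields $\|h_\delta(\rho)\|_{H^s}\leq P(\|\rho\|_{H^s})$ for some polynomial $P$ of degree $\lfloor s\rfloor$, i.e.\ only local boundedness. The local Lipschitz bound then follows by writing $h_\delta(\rho_1)-h_\delta(\rho_2) = \int_0^1 h_\delta'(\rho_2+t(\rho_1-\rho_2))\,(\rho_1-\rho_2)\,dt$ and re-applying the same machinery.

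The main obstacle is the detailed fractional-Sobolev analysis of the composition $h_\delta(\rho)$ for $d>1$: the Gagliardo seminorm of each Fa\`a di Bruno summand must be split into a term involving a Lipschitz increment of $h_\delta^{(|\pi|)}(\rho)$ and a term involving a difference quotient of a derivative of $\rho$, and each piece must be controlled by algebra-type estimates without losing Sobolev regularity. This is precisely why the regularity $h_\delta\in C^{\lceil d/2\rceil+2}$ is imposed and why the technical Fa\`a di Bruno manipulations are relegated to Appendix~\ref{app:b}; it is also the structural reason for the dichotomy in item~\ref{it:reg-3}, since for $d>1$ the composition unavoidably produces polynomial-in-$\|\rho\|_{H^s}$ growth which cannot be made sublinear.
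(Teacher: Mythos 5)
Your overall strategy --- expand in the eigenbasis $\{f_{\f{j},s}\}$, use the algebra property of $H^s$ (Lemma~\ref{prod}) together with Lemma~\ref{lem:1} to isolate $\|h_\delta(\rho)\|_{H^s}$ and its increments, then attack the composition via Fa\`a di Bruno for $d>1$ --- matches the skeleton of the paper's proof. Item~\ref{it:reg-1} via the algebra property, and the sublinearity estimate for $d=1$ via the Lipschitz bound of $h_\delta$ in the Gagliardo seminorm, are both fine.

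However, there is a genuine gap in your $d=1$ Lipschitz claim. You assert that the uniform Lipschitz property of $h_\delta$ ``yields directly'' $\|h_\delta(\rho_1)-h_\delta(\rho_2)\|_{H^s}\leq C\|\rho_1-\rho_2\|_{H^s}$. This is false: Lipschitz continuity of $h_\delta$ controls $|h_\delta(u_1(x))-h_\delta(u_1(y))|$ and $|h_\delta(u_1(x))-h_\delta(u_2(x))|$, but the Gagliardo seminorm of the difference involves the \emph{second-order increment}
\begin{align*}
h_\delta(u_1(\f{x}))-h_\delta(u_2(\f{x}))-\bigl\{h_\delta(u_1(\f{y}))-h_\delta(u_2(\f{y}))\bigr\},
\end{align*}
which a first-order Lipschitz estimate cannot control with the required decay in $|\f{x}-\f{y}|$. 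Writing this increment as $r(u_1(\f{x}),u_2(\f{x}))(u_1-u_2)(\f{x})-r(u_1(\f{y}),u_2(\f{y}))(u_1-u_2)(\f{y})$ with $r(\alpha,\beta)=(h_\delta(\alpha)-h_\delta(\beta))/(\alpha-\beta)$, one sees that the cross-term $[r(u_1(\f{x}),u_2(\f{x}))-r(u_1(\f{y}),u_2(\f{y}))]\,(u_1-u_2)(\f{y})$ forces control of the increments of $r$, hence a bound on $h_\delta''$ --- this is exactly the Taylor-expansion step in the paper and the reason the hypothesis is $h_\delta\in C^{\lceil d/2\rceil+2}$, not merely $C^1$. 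Moreover, once this is done correctly the resulting constant depends on the radius $k$ of the $\mathcal{W}^s$-ball (through factors like $\|u_1-u_2\|_{H^s}^2\cdot\int\!\!\int|u_1(\f{x})-u_1(\f{y})|^2/|\f{x}-\f{y}|^{1+2s}$), so the estimate is only \emph{locally} Lipschitz, not globally Lipschitz with an absolute constant as you wrote.

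For $d>1$, your proposal to use the integral representation $h_\delta(\rho_1)-h_\delta(\rho_2)=\int_0^1 h_\delta'(\rho_2+t(\rho_1-\rho_2))(\rho_1-\rho_2)\,\m t$ together with the algebra property is a plausible alternative route that trades the paper's direct factorisation (Lemma~\ref{fac} combined with Lemmas~\ref{der},~\ref{gagl}) for a boundedness estimate on the composition $\rho\mapsto h_\delta'(\rho)$; this could work since $h_\delta'\in C^{\lceil d/2\rceil+1}$, but you leave the required uniform-in-$t$ bound on $\|h_\delta'(\rho_t)\|_{H^s}$ at the level of a sketch. The paper instead expands the Gagliardo seminorm of $\partial^{\f{z}}[h_\delta(u_1)-h_\delta(u_2)]$ by Fa\`a di Bruno, factorises differences of products, and bounds each resulting term by H\"older with carefully chosen exponents --- these are the contents of Appendix~\ref{app:b}, which is where the substantive work resides and which your proposal does not reproduce.
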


\begin{proof}[Proof of Lemma~\ref{lem:7} for $d=1$]
We limit ourselves to proving Statements~\ref{it:reg-2} and~\ref{it:reg-3}.

\emph{Statement~\ref{it:reg-2}}. Take $(u_1,v_1),(u_2,v_2)\in\mathcal{W}^s$, such that $\|(u_1,v_1)\|_{\mathcal{W}^s}\leq k,
\|(u_2,v_2)\|_{\mathcal{W}^s}\leq k$. From~\eqref{eq:unreg-noise} and~\eqref{eq:reg-noise} we have that
\begin{align}\label{eq:16}
  & \left\|B_{N,\delta}((u_1,v_1))-B_{N,\delta}((u_2,v_2))\right\|^2_{L^0_2(\mathcal{W}^s)} \nonumber\\
  &\quad = \sum_{j\in\mathbb{Z}}{\left\|\sqrt{\alpha_{j,s,\ep}}\left\{B_{N,\delta}((u_1,v_1))-B_{N,\delta}((u_2,v_2))\right\}(0,f_{j,s})
    \right\|^2_{\mathcal{W}^s}}\nonumber\\
  &\quad = \frac{\sigma^2}{N}\sum_{j\in\mathbb{Z}}{\alpha_{j,s,\ep}\left\|\left(0,\left\{h_{\delta}(u_1)-
    h_{\delta}(u_2)\right\}f_{j,s}\right)\right\|^2_{\mathcal{W}^s}}.
\end{align}
We use the fact that $\{f_{j,s}\}_j$ are orthonormal in $H^s$, the equivalence of the norms $\|\cdot\|_{H^s}$ and
$\|\cdot\|_{W^{s,2}}$ (see Subsection~\ref{ss:0}), the boundedness of $h'_{\delta}$, and Lemma~\ref{prod} to write
\begin{align}
  \label{eq:12}
  & \left\|\left(0,\left\{h_{\delta}(u_1)-
  h_{\delta}(u_2)\right\}f_{j,s}\right)\right\|^2_{\mathcal{W}^s}\nonumber\\
  & \quad \leq C\|\left\{h_{\delta}(u_1)-
  h_{\delta}(u_2)\right\}f_{j,s}\|_{W^{s,2}}^2
  \leq CK^2_{\ref{prod}}\|h_{\delta}(u_1)-
  h_{\delta}(u_2)\|_{W^{s,2}}^2\nonumber\\
  & \quad = CK^2_{\ref{prod}}\left\{\int_{\mathbb{T}}{|h_{\delta}(u_1(x))-h_{\delta}(u_2(x))|^2\m x} \right.\nonumber\\
  & \quad \quad \left.+\int_{\mathbb{T}}{\int_{\mathbb{T}}{\frac{|h_{\delta}(u_1(x))-h_{\delta}(u_2(x))-\left\{h_{\delta}(u_1(y))
        -h_{\delta}(u_2(y))\right\}|^2}{|x-y|^{1+2s}}\m x}\m y}\right\}\nonumber\\
  & \quad \leq C(\delta)K^2_{\ref{prod}}\left\{\int_{\mathbb{T}}{|u_1(x)-u_2(x)|^2\m x} \right.\nonumber\\
  & \quad \quad \left.+\int_{\mathbb{T}}{\int_{\mathbb{T}}{\frac{|h_{\delta}(u_1(x))-h_{\delta}(u_2(x))-\left\{h_{\delta}(u_1(y))
        -h_{\delta}(u_2(y))\right\}|^2}{|x-y|^{1+2s}}\m x}\m y}\right\}.
\end{align}
We bound the numerator of~\eqref{eq:12}. If either $u_1(x)=u_2(x)$ or $u_1(y)=u_2(y)$, then simply
\begin{align}
  \label{eq:13}
  & |h_{\delta}(u_1(x))-h_{\delta}(u_2(x))-\left\{h_{\delta}(u_1(y))-h_{\delta}(u_2(y))\right\}|^2\nonumber\\
  & \quad \leq C(\delta)|u_1(x)-u_2(x)-\left\{u_1(y)-u_2(y)\right\}|^2.
\end{align}
Otherwise, we use the embedding $H^s\subset C^0$ and write
\begin{align}
  \label{eq:14}
  & |h_{\delta}(u_1(x))-h_{\delta}(u_2(x))-\left\{h_{\delta}(u_1(y))-h_{\delta}(u_2(y))\right\}|^2\nonumber\\
  & \quad \leq 2\left|\frac{h_{\delta}(u_1(x))-h_{\delta}(u_2(x))}{u_1(x)-u_2(x)}\right|^2|u_1(x)-u_2(x)
  -\left\{u_1(y)-u_2(y)\right\}|^2\nonumber\\
  & \quad \quad +  2\left|\frac{h_{\delta}(u_1(x))-h_{\delta}(u_2(x))}{u_1(x)-u_2(x)}-\frac{h_{\delta}(u_1(y))
    -h_{\delta}(u_2(y))}{u_1(y)-u_2(y)}\right|^2\left|u_1(y)-u_2(y)\right|^2\nonumber\\
  & \quad \leq 2C(\delta)|u_1(x)-u_2(x)-\left\{u_1(y)-u_2(y)\right\}|^2\nonumber\\
  & \quad \quad +  2\K^2\left|\frac{h_{\delta}(u_1(x))-h_{\delta}(u_2(x))}{u_1(x)-u_2(x)}-\frac{h_{\delta}(u_1(y))
    -h_{\delta}(u_2(y))}{u_1(y)-u_2(y)}\right|^2\|u_1-u_2\|^2_{H^s}\nonumber\\
  & \quad =:T_1+T_2.
\end{align}
We now focus on $T_2$. We define the auxiliary function
\begin{align*}
  r(\alpha,\beta) =
  \begin{cases}
    \left\{h_{\delta}(\alpha)-h_{\delta}(\beta)\right\}/(\alpha-\beta), & \mbox{if } \alpha\neq \beta, \\
    h'_{\delta}(\alpha), & \mbox{if } \alpha=\beta.
  \end{cases}
\end{align*}
We write
\begin{align*}
  & \left|\frac{h_{\delta}(u_1(x))-h_{\delta}(u_2(x))}{u_1(x)-u_2(x)}-\frac{h_{\delta}(u_1(y))-h_{\delta}(u_2(y))}{u_1(y)-u_2(y)}\right|^2\\
  & \quad \leq 2\left|r(u_1(x),u_2(x))-r(u_1(y),u_2(x))\right|^2\\
  & \quad \quad+2\left|r(u_1(y),u_2(x))-r(u_1(y),u_2(y))\right|^2\coloneqq T_3+T_4.
\end{align*}
In the above, we perform a first-order Taylor expansion (with respect to the first variable of $r$ only for $T_3$, and with
respect to the second variable of $r$ only for $T_4$). This is possible because $r$ has partial derivatives defined everywhere (as
a consequence of $h_{\delta}$ being $C^2(\mathbb{R})$).  In addition, the partial derivatives of $r$ are uniformly bounded by
$\sup_{z\in\mathbb{R}}{|h''_{\delta}(z)|}\leq C(\delta)$. This implies
\begin{align}
  \label{eq:15}
  T_3+T_4 \leq C(\delta)|u_1(x)-u_1(y)|^2+C(\delta)|u_2(x)-u_2(y)|^2.
\end{align}
We plug~\eqref{eq:13},~\eqref{eq:14} and~\eqref{eq:15} into~\eqref{eq:12} and take into account the assumption
$\|(u_1,v_1)\|_{\mathcal{W}^s}\leq k, \|(u_2,v_2)\|_{\mathcal{W}^s}\leq k$ to obtain
\begin{align*}
  & \left\|\left(0,\left\{h_{\delta}(u_1)-
    h_{\delta}(u_2)\right\}f_{j,s}\right)\right\|^2_{\mathcal{W}^s} \nonumber\\
    & \quad \leq C(\delta)K^2_{\ref{prod}}\left\{\int_{\mathbb{T}}{|u_1(x)-u_2(x)|^2\m x} \right.\nonumber\\
    & \quad \quad \left.+\int_{\mathbb{T}}{\int_{\mathbb{T}}{\frac{|h_{\delta}(u_1(x))-h_{\delta}(u_2(x))
          -\left\{h_{\delta}(u_1(y))-h_{\delta}(u_2(y))\right\}|^2}{|x-y|^{1+2s}}\m x}\m y}\right\}\\
    & \quad \leq C(\delta)K^2_{\ref{prod}}\left\{\int_{\mathbb{T}}{|u_1(x)-u_2(x)|^2\m x} \right.\nonumber\\
    & \quad \quad \left.+\,C(\delta)\int_{\mathbb{T}}{\int_{\mathbb{T}}{\frac{|u_1(x)-u_2(x)-
          \left\{u_1(y)-u_2(y)\right\}|^2}{|x-y|^{1+2s}}\m x}\m y}\right.\nonumber\\
    & \quad \quad \left. +\,C(\delta)\K^2\|u_1-u_2\|^2_{H^s}\int_{\mathbb{T}}{\int_{\mathbb{T}}{\frac{|u_1(x)-u_1(y)|^2
          +|u_2(x)-u_2(y)|^2}{|x-y|^{1+2s}}\m x}\m y} \right\}\\
    & \quad \leq C(\delta,k,K_{\ref{prod}},\K)\|u_1-u_2\|^2_{H^s}\leq C(\delta,k,K_{\ref{prod}},\K)\|(u_1,v_1)-(u_2,v_2)\|_{\mathcal{W}^s}^2.
\end{align*}
We can go back to~\eqref{eq:16} and deduce the local Lipschitz property
\begin{align*}
  & \left\|B_{N,\delta}((u_1,v_1))-B_{N,\delta}((u_2,v_2))\right\|^2_{L^0_2(\mathcal{W}^s)} \\ 
  & \quad \leq \frac{\sigma^2}{N}\left(\sum_{j}{\alpha_{j,s,\ep}}\right)C(\delta,k,K_{\ref{prod}},\K)\|(u_1,v_1)-(u_2,v_2)\|_{\mathcal{W}^s}^2. 
\end{align*}
\emph{Statement~\ref{it:reg-3}}. We write
\begin{align}
  \label{eq:18}
  & \left\|B_{N,\delta}((u,v))\right\|^2_{L^0_2(\mathcal{W}^s)}\nonumber \\
  & \quad = \sum_{j\in\mathbb{Z}}{\left\|\sqrt{\alpha_{j,s,\ep}}B_{N,\delta}((u,v))(0,f_{j,s})\right\|^2_{\mathcal{W}^s}}
    =\frac{\sigma^2}{N}\sum_{j\in\mathbb{Z}}{\alpha_{j,s,\ep}\left\|\left(0,h_{\delta}(u)f_{j,s}\right)\right\|^2_{\mathcal{W}^s}}\nonumber\\
  & \quad = \frac{\sigma^2}{N}\left[\sum_{j\in\mathbb{Z}}{\alpha_{j,s,\ep}\left\|{h_{\delta}(u)f_{j,s}}\right\|^2_{H^s}}
    \right] \leq K^2_{\ref{prod}}\frac{\sigma^2}{N}\left[\sum_{j\in\mathbb{Z}}{\alpha_{j,s,\ep}\left\|{h_{\delta}(u)}\right\|^2_{H^s}}
    \right]\nonumber\\
   & \quad     \leq C(\delta)K^2_{\ref{prod}}\frac{\sigma^2}{N}\ep^{-(2s+1)}\left(1+\|(u,v)\|^2_{\mathcal{W}^s}\right),
\end{align}
where we have used Lemma~\ref{prod}, the sublinearity of $h_{\delta}$ at infinity, the boundedness of $h'_{\delta}$, and
Lemma~\ref{lem:1}. This completes the proof.
\end{proof}

\begin{proof}[Proof of Lemma~\ref{lem:7} for $d>1$]

In this proof, we need to analyse quantities associated with derivatives of the distinctive nonlinearity $h_{\delta}(u)$, $u\in
H^{d/2+\eta}$. For this purpose, we make heavy use of the contents of Appendix~\ref{app:b} (integrability properties of the Fa\`a
di Bruno representation of derivatives of $h_{\delta}(u)$) and Appendix~\ref{app:a} (factorisation of differences of two distinct
instances of the same derivative).

We again focus on points~\ref{it:reg-2} and~\ref{it:reg-3} only.

\emph{Statement~\ref{it:reg-2}}. Take $(u_1,\f{v}_1),(u_2,\f{v}_2)\in\mathcal{W}^s$, such that
$\|(u_1,\f{v}_1)\|_{\mathcal{W}^s}\leq k, \|(u_2,\f{v}_2)\|_{\mathcal{W}^s}\leq k$. In order to bound
\begin{equation*}
  \left\|B_{N,\delta}((u_1,\f{v}_1))-B_{N,\delta}((u_2,\f{v}_2))\right\|^2_{L^0_2(\mathcal{W}^s)}
\end{equation*}
we only need to bound
\begin{align*}
  \frac{\sigma^2}{N}\sum_{\f{j}\in\mathbb{Z}}{\alpha_{\f{j},s,\ep}\left\|\left(0,\left\{h_{\delta}(u_1)-
    h_{\delta}(u_2)\right\}f_{\f{j},s}\right)\right\|^2_{H^s\times H^s}}.
\end{align*}
Moreover, Lemma~\ref{prod} allows us to only focus on estimating $\|h_{\delta}(u_1)- h_{\delta}(u_2)\|^2_{H^s}$. We introduce the
shorthand notations
\begin{align*}
  \mathcal{P}_{\pi,\alpha}u(\f{x})\coloneqq \prod_{j\in J(\pi)}{\prod_{b\in B(\pi)\colon |b|=j}{\frac{\partial^{(j)}u(\f{x})}
      {\prod_{z\in b}{\partial x_{\ell_z}}}}},\qquad \mathcal{P}_{\pi}u\coloneqq \mathcal{P}_{\pi,\lfloor d/2 \rfloor}u
\end{align*}
for every $\mathbb{N}\ni\alpha\leq \lfloor d/2 \rfloor$ and $\pi\in\Pi_{\alpha}$, and $\mathcal{P}_{\pi,0}u(\f{x})\coloneqq 1$.
Due to the Fa\`a di Bruno formula recalled in Lemma~\ref{faa} and the equivalence of the norms $\|\cdot\|_{H^s}$ and
$\|\cdot\|_{W^{s,2}}$, the term $\|h_{\delta}(u_1)- h_{\delta}(u_2)\|^2_{H^s}$ can be controlled by providing a bound for
\begin{align}
  \label{eq:4000}
  & \left[\sum_{\alpha=0}^{\lfloor d/2 \rfloor}{\left\| h_{\delta}^{(|\pi_{\alpha}|)}(u_1)\mathcal{P}_{\pi_{\alpha},\alpha}u_1
      - h_{\delta}^{(|\pi_{\alpha}|)}(u_2)\mathcal{P}_{\pi_{\alpha},\alpha}u_2\right\|^2_{L^2}}\right]\nonumber\\
  & \quad +\int_{\mathbb{T}^d}{\int_{\mathbb{T}^d}{\frac{1}{|\f{x}-\f{y}|^{d+(s-\lfloor s\rfloor)2}}
      \left|h_{\delta}^{(|\pi|)}(u_1(\f{x}))\mathcal{P}_{\pi}u_1(\f{x})-h_{\delta}^{(|\pi|)}(u_2(\f{x}))\mathcal{P}_{\pi}u_2(\f{x})\right.} }
  +\nonumber\\
  & \quad \quad \left.-\left\{h_{\delta}^{(|\pi|)}(u_1(\f{y}))\mathcal{P}_{\pi}u_1(\f{y})-h_{\delta}^{(|\pi|)}(u_2(\f{y}))
  \mathcal{P}_{\pi}u_2(\f{y})\right\}\right|^2\m\f{x}\m\f{y} \coloneqq  A_1 + A_2
\end{align}
for any choice $\pi_{\alpha}\in\Pi_{\alpha}$, $\alpha\in\{0,\dots,\lfloor d/2 \rfloor\}$, and $\pi\in\Pi_{\lfloor d/2 \rfloor}$.

Upon adding and subtracting terms of the type $h_{\delta}^{(|\pi_{\alpha}|)}(u_2)\mathcal{P}_{\pi_{\alpha},\alpha}u_1$, for
$\alpha\in\{0,\dots,\lfloor d/2 \rfloor\}$, the term $A_1$ is bounded (up to a constant) by
\begin{align}
  &\sum_{\alpha=0}^{\lfloor d/2 \rfloor}{\left\| \left\{h_{\delta}^{(|\pi_{\alpha}|)}(u_1) -
    h_{\delta}^{(|\pi_{\alpha}|)}(u_2)\right\}\mathcal{P}_{\pi_{\alpha},\alpha}u_2\right\|^2_{L^2}}
  + \sum_{\alpha=0}^{\lfloor d/2 \rfloor}{\left\| h_{\delta}^{(|\pi_{\alpha}|)}(u_2)\left\{\mathcal{P}_{\pi_{\alpha},\alpha}u_1
    - \mathcal{P}_{\pi_{\alpha},\alpha}u_2\right\}\right\|^2_{L^2}}\nonumber\\
  & \quad \label{eq:6000}\leq C(d,\delta)\left[\K^2\|u_1-u_2\|^2_{H^s}\sum_{\alpha=0}^{\lfloor d/2 \rfloor}{\left\|
      \mathcal{P}_{\pi_{\alpha},\alpha}u_2\right\|^2_{L^2}} + \sum_{\alpha=0}^{\lfloor d/2 \rfloor}{\left\| \mathcal{P}_{\pi_{\alpha},\alpha}u_1
      - \mathcal{P}_{\pi_{\alpha},\alpha}u_2\right\|^2_{L^2}}\right]\\
  & \quad \label{eq:6001}\leq C(d,\delta,k)\K^2K_{\ref{der}}\|u_1-u_2\|^2_{H^s}
  +C(d,\delta)\sum_{\alpha=0}^{\lfloor d/2 \rfloor}{\left\| \mathcal{P}_{\pi_{\alpha},\alpha}u_1
    - \mathcal{P}_{\pi_{\alpha},\alpha}u_2\right\|^2_{L^2}},
\end{align}
where we have used a Taylor expansion for (and boundedness of) derivatives of $h_{\delta}$ and the Sobolev embedding $H^s\subset
C^0$ in~\eqref{eq:6000}, and Lemma~\ref{der} in~\eqref{eq:6001}. We may now apply Lemma~\ref{fac}--\ref{fac:i} to factorise
$\mathcal{P}_{\pi_{\alpha},\alpha}u_1 - \mathcal{P}_{\pi_{\alpha},\alpha}u_2$ into a sum of terms, each of which can then be dealt
with using Lemma~\ref{der}. We obtain
\begin{align}
  \label{eq:6002}
  A_1 \leq \eqref{eq:6001} \leq C(d,\delta,k)\K^2K_{\ref{der}}\|u_1-u_2\|^2_{H^s}.
\end{align}
More generally, each application of Lemma~\ref{fac} below is, at least conceptually, identical to the one illustrated
above. Namely, it is used to factorise a difference of objects into a sum of terms which in turn can be estimated using either
Lemma~\ref{der} or Lemma~\ref{gagl}.

Following simple algebraic rewritings, the term $A_2$ can be bounded (up to a constant) by
\begin{align}
  \label{eq:4001}
  & \int_{\mathbb{T}^d}{\int_{\mathbb{T}^d}{\frac{ \left|h_{\delta}^{(|\pi|)}(u_1(\f{x}))-h_{\delta}^{(|\pi|)}(u_2(\f{x}))
        -\{h_{\delta}^{(|\pi|)}(u_1(\f{y}))
        -h_{\delta}^{(|\pi|)}(u_2(\f{y}))\}\right|^2\left|\mathcal{P}_{\pi}u_1(\f{x})\right|^2 }
      {|\f{x}-\f{y}|^{d+(s-\lfloor s\rfloor)2}}}\m\f{x}\m\f{y}}\nonumber\\
  & \quad + \int_{\mathbb{T}^d}{\int_{\mathbb{T}^d}{\frac{ \left|h_{\delta}^{(|\pi|)}(u_1(\f{y}))
        -h_{\delta}^{(|\pi|)}(u_2(\f{y}))\right|^2\left|\mathcal{P}_{\pi}u_1(\f{x})-\mathcal{P}_{\pi}u_1(\f{y})\right|^2 }{|\f{x}
        -\f{y}|^{d+(s-\lfloor s\rfloor)2}}}\m\f{x}\m\f{y}}\nonumber\\
  &\quad + \int_{\mathbb{T}^d}{\int_{\mathbb{T}^d}{\frac{ \left|h_{\delta}^{(|\pi|)}(u_2(\f{x}))\right|^2
        \left|\mathcal{P}_{\pi}u_1(\f{x})-\mathcal{P}_{\pi}u_2(\f{x})
        -\{\mathcal{P}_{\pi}u_1(\f{y})-\mathcal{P}_{\pi}u_2(\f{y})\}\right|^2 }
      {|\f{x}-\f{y}|^{d+(s-\lfloor s\rfloor)2}}}\m\f{x}\m\f{y}}\nonumber\\
  & \quad + \int_{\mathbb{T}^d}{\int_{\mathbb{T}^d}{\frac{  \left|h_{\delta}^{(|\pi|)}(u_2(\f{x}))
        -h_{\delta}^{(|\pi|)}(u_2(\f{y}))\right|^2\left|\mathcal{P}_{\pi}u_1(\f{y})-\mathcal{P}_{\pi}u_2(\f{y})\right|^2 }
      {|\f{x}-\f{y}|^{d+(s-\lfloor s\rfloor)2}}}\m\f{x}\m\f{y}}\nonumber\\
  & \quad \quad =:T_1+\dots+T_4.
\end{align}     
Term $T_1$ is dealt with using~\eqref{eq:14} and~\eqref{eq:15} (with $h^{(|\pi|)}_{\delta}$ replacing $h_{\delta}$), the embedding
$H^s\subset C^0$, and Lemma~\ref{gagl}--\ref{gagl:i}. Its bound reads
\begin{align*}
T_1 & \leq C(\delta)\int_{\mathbb{T}^d}{\int_{\mathbb{T}^d}{\frac{ \left|u_1(\f{x})-u_2(\f{x})-\{u_1(\f{y})
  -u_2(\f{y})\}\right|^2\left|\mathcal{P}_{\pi}u_1(\f{x})\right|^2 }{|\f{x}-\f{y}|^{d+(s-\lfloor s\rfloor)2}}}\m\f{x}\m\f{y}}\\
  & \quad + C(\delta)\K^2\|u_1-u_2\|_{H^s}^2\int_{\mathbb{T}^d}{\int_{\mathbb{T}^d}{\frac{ \left|u_1(\f{x})-u_1(\f{y})
  \right|^2\left|\mathcal{P}_{\pi}u_1(\f{x})\right|^2 }{|\f{x}-\f{y}|^{d+(s-\lfloor s\rfloor)2}}}\m\f{x}\m\f{y}}\\
  & \quad + C(\delta)\K^2\|u_1-u_2\|_{H^s}^2\int_{\mathbb{T}^d}{\int_{\mathbb{T}^d}{\frac{ \left|u_2(\f{x})-u_2(\f{y})
  \right|^2\left|\mathcal{P}_{\pi}u_1(\f{x})\right|^2 }{|\f{x}-\f{y}|^{d+(s-\lfloor s\rfloor)2}}}\m\f{x}\m\f{y}}\\
  & \leq C(\delta,k)\K^2K_{\ref{gagl}}\|u_1-u_2\|_{H^s}^2.
\end{align*}
The embedding $H^s\subset C^0$ and Lemmas~\ref{fac}--\ref{fac:i} and~\ref{gagl}--\ref{gagl:ii} allow to bound $T_2$ as
\begin{align*}
  T_2 & \leq C(\delta)\K^2\|u_1-u_2\|_{H^s}^2\int_{\mathbb{T}^d}{\int_{\mathbb{T}^d}{\frac{ \left|\mathcal{P}_{\pi}u_1(\f{x})
        -\mathcal{P}_{\pi}u_1(\f{y})\right|^2 }{|\f{x}-\f{y}|^{d+(s-\lfloor s\rfloor)2}}}\m\f{x}\m\f{y}}\\
  & \leq  C(\delta,k)\K^2K_{\ref{gagl}}\|u_1-u_2\|_{H^s}^2.
\end{align*}
Term $T_3$ is dealt with by relying on the boundedness of $h^{(|\pi|)}_{\delta}$ and using Lemmas~\ref{fac}--\ref{fac:ii}
and~\ref{gagl}--\ref{gagl:ii}, thus giving
\begin{align*}
  T_3 & \leq C(\delta)\int_{\mathbb{T}^d}{\int_{\mathbb{T}^d}{\frac{ \left|\mathcal{P}_{\pi}u_1(\f{x})-\mathcal{P}_{\pi}u_2(\f{x})
        -\{\mathcal{P}_{\pi}u_1(\f{y})-\mathcal{P}_{\pi}u_2(\f{y})\}\right|^2 }{|\f{x}-\f{y}|^{d+(s-\lfloor s\rfloor)2}}}\m\f{x}\m\f{y}}\\
  & \leq C(\delta,k)K_{\ref{gagl}}\|u_1-u_2\|_{H^s}^2.
\end{align*}
Finally, term $T_4$ is dealt with using a Taylor expansion on $h^{(|\pi|)}_{\delta}$, and Lemmas~\ref{fac}--\ref{fac:i}
and~\ref{gagl}--\ref{gagl:i}. Its bounds reads
\begin{align*}
T_4 & \leq  C(\delta)\int_{\mathbb{T}^d}{\int_{\mathbb{T}^d}{\frac{  \left|u_2(\f{x})
      -u_2(\f{y})\right|^2\left|\mathcal{P}_{\pi}u_1(\f{y})-\mathcal{P}_{\pi}u_2(\f{y})\right|^2 }
    {|\f{x}-\f{y}|^{d+(s-\lfloor s\rfloor)2}}}\m\f{x}\m\f{y}}\\
  & \leq C(\delta,k)K_{\ref{gagl}}\|u_1-u_2\|_{H^s}^2.
\end{align*}
Putting together the bounds obtained for~\eqref{eq:4001} and~\eqref{eq:6002} into~\eqref{eq:4000}, and using Lemma~\ref{prod} and
Lemma~\ref{lem:1}--\ref{lem:1ii}, we deduce
\begin{align}
  \label{eq:31}
  & \left\|B_{N,\delta}((u_1,\f{v}_1))-B_{N,\delta}((u_2,\f{v}_2))\right\|^2_{L^0_2(\mathcal{W}^s)}\nonumber\\
  & \quad \leq K^2_{\ref{prod}}C(\delta,k,d,\K,K_{\ref{der}},K_{\ref{gagl}})\sigma^2 N^{-1}\ep^{-(2s+d)}
  \left\|(u_1,\f{v}_1)-(u_2,\f{v}_2)\right\|^2_{\mathcal{W}^s}.
\end{align}

\emph{Statement~\ref{it:reg-3}}. The proof is similar to that of Statement~\ref{it:reg-2}. Take $(u,\f{v})\in\mathcal{W}^s$, such
that $\|(u,\f{v})\|_{\mathcal{W}^s}\leq k$. We only need to bound
\begin{align}
  \label{eq:4002}
  & \left[\sum_{\alpha=0}^{\lfloor d/2 \rfloor}{\left\| h_{\delta}^{(|\pi_{\alpha}|)}(u)
      \mathcal{P}_{\pi_{\alpha},\alpha}u \right\|^2_{L^2}}\right]\nonumber\\
  & \quad +\int_{\mathbb{T}^d}{\int_{\mathbb{T}^d}{\frac{\left|h_{\delta}^{(|\pi|)}(u(\f{x}))
        \mathcal{P}_{\pi}u(\f{x})-h_{\delta}^{(|\pi|)}(u(\f{y}))\mathcal{P}_{\pi}u(\f{y})\right|^2}{|\f{x}-\f{y}|^{d+(s-\lfloor s\rfloor)2}}
      \m\f{x}} \m \f{y}}\coloneqq  A_3 + A_4
\end{align}
for any choice $\pi_{\alpha}\in\Pi_{\alpha}$, $\alpha\in\{0,\dots,\lfloor d/2 \rfloor\}$, and $\pi\in\Pi_{\lfloor d/2
  \rfloor}$. The term $A_3$ is easily settled using the boundedness of derivatives of $h_{\delta}$ and
Lemma~\ref{der}. Furthermore, $A_4$ is bounded (up to a constant) by
\begin{align}
  \label{eq:4003}
  & \int_{\mathbb{T}^d}{\int_{\mathbb{T}^d}{  \frac{\left|h_{\delta}^{(|\pi|)}(u(\f{x}))
        -h_{\delta}^{(|\pi|)}(u(\f{y}))\right|^2\left|\mathcal{P}_{\pi}u(\f{y})\right|^2}
      {|\f{x}-\f{y}|^{d+(s-\lfloor s\rfloor)2}}\m \f{x}\m \f{y} }}\nonumber\\
  & \quad + \int_{\mathbb{T}^d}{\int_{\mathbb{T}^d}{  \frac{\left|h_{\delta}^{(|\pi|)}(u(\f{x}))\right|^2\left|\mathcal{P}_{\pi}
        u(\f{x})-\mathcal{P}_{\pi}u(\f{y})\right|^2}{|\f{x}-\f{y}|^{d+(s-\lfloor s\rfloor)2}}\m \f{x}\m \f{y} }}\coloneqq T_5+T_6 .
\end{align}
Term $T_5$ is bounded using a Taylor expansion of $h^{(\pi)}_{\delta}$, and Lemma~\ref{gagl}--\ref{gagl:i}. Term $T_6$ is bounded
relying on the boundedness of $h^{(|\pi|)}_{\delta}$ and using Lemmas~\ref{fac}--\ref{fac:i} and~\ref{gagl}--\ref{gagl:ii}.

Putting the bounds obtained for~\eqref{eq:4003} into~\eqref{eq:4002} and using Lemmas~\ref{prod} and~\ref{lem:1}--\ref{lem:1ii},
we deduce
\begin{align}\label{eq:6003}
  \left\|B_{N,\delta}((u,\f{v}))\right\|^2_{L^0_2(\mathcal{W}^s)} & = \frac{\sigma^2}{N}\sum_{\f{j}\in\mathbb{Z}^d}{\alpha_{\f{j},s,\ep}\left\|
    \left(0,h_{\delta}(u)f_{\f{j},s}\right)\right\|^2_{\mathcal{W}^s}}
  \leq C(d)K^2_{\ref{prod}}\frac{\sigma^2}{N}\sum_{\f{j}\in\mathbb{Z}^d}{\alpha_{\f{j},s,\ep}\left\|{h_{\delta}(u)}\right\|^2_{H^s}}
   \nonumber\\
  &  \leq K^2_{\ref{prod}}C(\delta,d,K_{\ref{der}},K_{\ref{gagl}})\frac{\sigma^2}{N}\sum_{\f{j}\in\mathbb{Z}^d}{\alpha_{\f{j},s,\ep}}
   \left(1+\|(u,\f{v})\|^{2(\lfloor d/2\rfloor +1)}_{\mathcal{W}^s}\right)\nonumber\\
   & \leq K^2_{\ref{prod}}C(\delta,s,d,K_{\ref{der}},K_{\ref{gagl}})\frac{\sigma^2}{N}\ep^{-(2s+d)}
   \left(1+\|(u,\f{v})\|^{2(\lfloor d/2\rfloor +1)}_{\mathcal{W}^s}\right).
\end{align}
The assumption $\|(u,\f{v})\|_{\mathcal{W}^s}\leq k$ gives the desired local boundedness property. The proof is complete.
\end{proof}

\section{Proof of Theorem~\ref{thm:10}}
\label{ss:20}
This is an adaptation of~\cite[Theorem 4.4]{Cornalba2020a}, and we heavily rely on the tools developed in Section~\ref{s:2}.  The
functional $\alpha_{U}$ is locally Lipschitz and locally bounded in the $\mathcal{W}^s$-norm. This is a consequence of the
following simple bound for $u\in H^s$ and $\ell\in\{1,\dots,d\}$
\begin{align*}
  \|\partial_{x_{\ell}} U\ast u\|^2_{H^s} & = \sum_{\f{j}\in\mathbb{Z}^d}{\widehat{(\partial_{x_{\ell}}U\ast u)}_{\f{j}}
    \overline{\widehat{(\partial_{x_{\ell}}U\ast u)}}_{\f{j}}(1+|\f{j}|^2)^s}=\sum_{\f{j}
    \in\mathbb{Z}^d}{\left|\widehat{(\partial_{x_{\ell}}U)}_{\f{j}}\right|^2\left|\widehat{u}_{\f{j}}\right|^2(1+|\f{j}|^2)^s}\\
  & \leq C(\|U\|_{C^1},d)\sum_{\f{j}\in\mathbb{Z}^d}{\left|\widehat{u}_{\f{j}}\right|^2(1+|\f{j}|^2)^s}=C(\|U\|_{C^1},d)\|u\|^2_{H^s}.
\end{align*}
These properties of $\alpha_{U}$, together with Lemmas~\ref{lem:80} and~\ref{lem:7}, allow us to use~\cite[Theorem
  4.5]{Tappe2012a} and deduce the existence and uniqueness of a local $\mathcal{W}^s$-valued mild solution to~\eqref{eq:25} in the
sense of~\cite[Chapter 7]{Da-Prato2014a}. Specifically, there is a stopping time $\tau>0$ and a unique $\mathcal{W}^s$-valued
predictable process $X_{\ep,\delta}=(\tilde{\rho}_{\ep,\delta},\tildebj_{\ep,\delta})$ defined on $[0,\tau]$ such that
$\mathbb{P}(\int_{0}^{\tau}{\|X_{\ep,\delta}(z)\|^2_{\mathcal{W}^s}\m z}<\infty)=1$, and satisfying, for each $t>0$
\begin{align}
  \label{eq:2004}
  X_{\ep,\delta}(t\wedge \tau) & = S(t\wedge \tau)\,X_0+\int_{0}^{t\wedge \tau}{S(t\wedge \tau-s)\,\alpha_{U}(X_{\ep,\delta}(s))\,\m s}\nonumber\\
  & \quad +\int_{0}^{t\wedge \tau}{S(t\wedge \tau-s)\,B_{N,\delta}(X_{\ep,\delta}(s))\,\m W_{\ep}},\qquad\mathbb{P}\mbox{-a.s.},
\end{align}
where $\{S(t)\}_{t\geq 0}$ is the $C_0$-semigroup generated by $A$. Using~\cite[Theorem 4.5 and Remark 4.6]{Tappe2012a}, the
continuous embedding $H^s\subset C^0$, and the assumption $\min_{\f{x}\in\mathbb{T}^d}\tilde{\rho}_{0}(\f{x})> \delta$, we deduce
that there exists $T=T(\tilde{\rho}_0)$ and a unique deterministic $\mathcal{W}^s$-valued mild solution
$Z_{\delta}=(\rho_Z,\f{j}_Z)$ to the noise-free equivalent of~\eqref{eq:25} up to $T$, such that
$\min_{\f{x}\in\mathbb{T}^d,s\in[0,T]}\rho_Z(\f{x},s)>\delta$. It is also obvious that there is $k>0$ such that
$\max_{s\in[0,T]}\|\rho_Z(\cdot,s)\|_{\mathcal{W}^s}<k$.
 
We compare $X_{\ep,\delta}$ and $Z_{\delta}$. As $X_{\ep,\delta}$ is a local mild solution, it is well-defined up to the first
exit time from the $\mathcal{W}^s$-ball of radius $k$. In particular, $X_{\ep,\delta}$ and $Z_{\delta}$ are well-defined up to the
stopping time
\begin{align}
  \label{eq:2003}
  \tau_{\delta, k} & \coloneqq  \inf\Big\{t>0:\|X_{\ep,\delta}(t)\|_{\mathcal{W}^s}\geq k\Big\}
  \wedge \inf\Big\{t>0:\min_{\f{x}\in\mathbb{T}^d}{\tilde{\rho}_{\ep,\delta}(\f{x},t)}\leq \delta\Big\} \wedge T.
\end{align}
We consider the difference
\begin{align}
  \label{i:305}
  X_{\ep,\delta}(t\wedge\tau_{\delta,k})-Z(t\wedge\tau_{\delta,k}) & =\int_{0}^{t\wedge\tau_{\delta,k}}
  {S(t\wedge \tau_{\delta,k}-s)\Big[\alpha_{U}(X_{\ep,\delta}(s))-\alpha_{U}(Z(s))\Big]\,\m s}\nonumber\\
  & \quad +\int_{0}^{t\wedge\tau_{\delta,k}}{S(t\wedge \tau_{\delta,k}-s)\,B_{N,\delta}(X_{\ep,\delta}(s))\,\m W_{\ep}}.
\end{align}
Let $q>2$. For some $C_1=C_1\left(U,k,T,q,\eta,d,K_{\ref{prod}}\right)$, we have
\begin{align}\label{eq:7000}
  & \mean{\sup_{s\in[0,t]}{\left\|X_{\ep,\delta}(s\wedge\tau_{\delta,k})-Z(s\wedge\tau_{\delta,k})\right\|^q_{\mathcal{W}^s}}} \nonumber\\
  & \quad \leq C_1\,\mean{\int_{0}^{t}{\left\|X_{\ep,\delta}(u)-Z(u)\right\|^q_{\mathcal{W}^s}\,\mathbf{1}_{[0,\tau_{\delta,k}]}(u)\,\m u}}\nonumber\\
  & \quad\quad +\mean{\sup_{s\in[0,T]}{\left\|\int_{0}^{s}{S(s\wedge\tau_{\delta,k}-u)B_{N,\delta}
        (X_{\ep,\delta}(u))\,\mathbf{1}_{[0,\tau_{\delta,k}]}(u)\,\m W_{\ep}}\right\|_{\mathcal{W}^s}^q}}. 
        \end{align}
        We use~\cite[Proposition 7.3]{Da-Prato2014a} and Lemma~\ref{lem:7}-\ref{it:reg-3}, inequality~\eqref{eq:6003}, to provide the bound
        \begin{align}
   \eqref{eq:7000} & \leq C_1\!\int_{0}^{t}{\mean{\sup_{s\in[0,u]}\left\|X_{\ep,\delta}(s\wedge\tau_{\delta,k})
       -Z(s\wedge\tau_{\delta,k})\right\|^q_{\mathcal{W}^s}}\m u} + C(\sigma,\delta,T,q,\eta,d,K_{\ref{prod}},K_{\ref{der}},K_{\ref{gagl}})
   \nonumber\\
  & \quad \times \left(N^{-1}\ep^{-(2s+d)}\right)^{q/2}
  \mean{\int_{0}^{T}{\!\!\left(1+\|X_{\ep,\delta}(u)\|^{(\lfloor d/2 \rfloor +1)q}_{\mathcal{W}^s}\right)
      \mathbf{1}_{[0,\tau_{\delta,k}]}(u)\,\m u}}\nonumber\\
  & \leq C_1\!\int_{0}^{t}{\mean{\sup_{s\in[0,u]}\left\|X_{\ep,\delta}(s\wedge\tau_{\delta,k})
      -Z(s\wedge\tau_{\delta,k})\right\|^q_{\mathcal{W}^s}}\m u}
  +C_2\left(N^{-1}\ep^{-(2s+d)}\right)^{q/2},
  \label{i:306}
\end{align}
for some $C_2=C_2(\sigma,\delta,T,q,k,\eta,d,K_{\ref{prod}},K_{\ref{der}},K_{\ref{gagl}})$. Crucially, the last inequality is not
affected by the superlinear nature of the noise for $d>1$, as $X_{\ep,\delta}$ lives on a bounded set of $\mathcal{W}^s$ up to
$\tau_{\delta,k}$. Applying the Gronwall Lemma to~\eqref{eq:7000}--\eqref{i:306} gives
\begin{align}
  \label{i:307}
  \mean{\sup_{s\in[0,T]}{\left\|X_{\ep,\delta}(s\wedge\tau_{\delta,k})-Z(s\wedge\tau_{\delta,k})\right\|^q_{\mathcal{W}^s}}} \leq
  C_2\left(N^{-1}\ep^{-(2s+d)}\right)^{q/2}e^{TC_1}.
\end{align}
The choice of $\theta$ for~\eqref{eq:2000} given in the assumption and a Chebyshev-type argument imply that 
\begin{equation*}
  \lim_{N\rightarrow \infty}\mathbb{P}\left(\sup_{s\in[0,T]}{\left\|X_{\ep,\delta}(s\wedge\tau_{\delta,k})
    -Z(s\wedge\tau_{\delta,k})\right\|_{\mathcal{W}^s}}\geq \beta \right)=1
\end{equation*}
for any $\beta\in(0,1)$. It is now a standard routine (see~\cite[Theorem 4.4]{Cornalba2020a}) to pick $\beta$ small enough, $N$
big enough, and deduce the existence of a set $F_{\nu}$ such that $\mathbb{P}(F_{\nu})>1-\nu$, on which $\tau_{\delta,k}\equiv T$,
on which $\tilde{\rho}_{\ep}\geq \delta$, and on which~\eqref{eq:25} is satisfied by $X_{\ep,\delta}$ the sense of mild
solutions. Going back to~\eqref{eq:2004}, this implies
\begin{align*}
  X_{\ep,\delta}(t) & = S(t)\,X_0+\int_{0}^{t}{S(t-s)\,\alpha_{U}(X_{\ep,\delta}(s))\,\m s}+\int_{0}^{t}{S(t-s)\,
    B_{N,\delta}(X_{\ep,\delta}(s))\,\m W_{\ep}}\\
  & = S(t)\,X_0+\int_{0}^{t}{S(t-s)\,\alpha_{U}(X_{\ep,\delta}(s))\,\m s}+\int_{0}^{t}{S(t-s)\,B_{N}(X_{\ep,\delta}(s))
    \,\m W_{\ep}}\quad\mbox{on }F_{\nu},
 \end{align*}
and this concludes the proof.

\appendix

\section{Factorisation of products}\label{app:a}
We recall the following simple factorisation for differences of products.
\begin{lemma}\label{fac}
  Let $\f{a},\f{b},\f{c},\f{d}\in\mathbb{R}^N$.
  \begin{enumerate}
  \item\label{fac:i} We have 
    \begin{align}
      \label{i:10}
      \prod_{i=1}^{N}{a_i}-\prod_{i=1}^{N}{b_i}=\sum_{k=1}^{N}{b_{<k}(a_k-b_k)a_{>k}},
    \end{align}
    where we have used the shorthand notations $b_{<k}\coloneqq \prod_{j=1}^{k-1}{b_j}$ and $a_{>k}\coloneqq \prod_{j=k+1}^{N}{a_j}$ (with the
    usual convention of the product over an empty set being unitary).
  \item\label{fac:ii} For each $k=1,\dots,N$, consider the families 
    \begin{align*}
      & \{\alpha^k_j\}_{j=1}^{N-1}\coloneqq (b_1,\dots,b_{k-1},a_{k+1},\cdots,a_N),
      & \{\beta^k_j\}_{j=1}^{N-1}\coloneqq (d_1,\dots,d_{k-1},c_{k+1},\cdots,c_N).
    \end{align*}
    We have
    \begin{align}
      \label{i:11}
      \prod_{i=1}^{N}{a_i}-\prod_{i=1}^{N}{b_i}-\left(\prod_{i=1}^{N}{c_i}-\prod_{i=1}^{N}{d_i}\right) &=
      \sum_{k=1}^{N}{b_{<k}(a_k-b_k-(c_k-d_k))a_{>k}}\nonumber\\
      & \quad +\sum_{k=1}^{N}{\sum_{j=1}^{N-1}{(c_k-d_k)(\beta^k_{<j}(\alpha^k_j-\beta^k_j)\alpha^k_{>j})}}.
    \end{align}
  \end{enumerate}
\end{lemma}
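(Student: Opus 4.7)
The plan is to prove part \ref{fac:i} by a standard telescoping argument, and then to reduce part \ref{fac:ii} to two successive applications of part \ref{fac:i}.

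For part \ref{fac:i}, I would write the left-hand side as the telescoping sum
\[
\prod_{i=1}^{N}a_i-\prod_{i=1}^{N}b_i=\sum_{k=1}^{N}\Bigl(b_{<k}\,a_{\ge k}-b_{\le k}\,a_{>k}\Bigr),
\]
where $a_{\ge k}\coloneqq a_k\,a_{>k}$ and $b_{\le k}\coloneqq b_{<k}\,b_k$ (with the empty-product convention). Each summand equals $b_{<k}(a_k-b_k)a_{>k}$, which is exactly \eqref{i:10}. The neighbouring terms in the telescoping sum cancel in pairs because $b_{\le k}\,a_{>k}=b_{<k+1}\,a_{\ge k+1}$, with boundary contributions recovering $\prod a_i-\prod b_i$.

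For part \ref{fac:ii}, I would apply \eqref{i:10} separately to $\prod a_i-\prod b_i$ and to $\prod c_i-\prod d_i$, then combine summand-by-summand. The $k$-th summand of the difference is
\[
b_{<k}(a_k-b_k)a_{>k}-d_{<k}(c_k-d_k)c_{>k}.
\]
Adding and subtracting $b_{<k}(c_k-d_k)a_{>k}$ splits this into
\[
b_{<k}\bigl(a_k-b_k-(c_k-d_k)\bigr)a_{>k}+(c_k-d_k)\bigl(b_{<k}\,a_{>k}-d_{<k}\,c_{>k}\bigr).
\]
The first piece is exactly the first sum on the right-hand side of \eqref{i:11}. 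For the second piece, I observe that by the very definitions of $\alpha^k$ and $\beta^k$, the factor $b_{<k}\,a_{>k}$ is the product of all entries of the length-$(N-1)$ vector $\alpha^k$, and $d_{<k}\,c_{>k}$ is the product of all entries of $\beta^k$. A second application of part \ref{fac:i} (with $N$ replaced by $N-1$, $a$ by $\alpha^k$, $b$ by $\beta^k$) expresses this remaining difference as $\sum_{j=1}^{N-1}\beta^k_{<j}(\alpha^k_j-\beta^k_j)\alpha^k_{>j}$, yielding the second sum of \eqref{i:11}.

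There is no real obstacle here, as both parts are elementary algebraic manipulations; the only thing requiring any care is the bookkeeping in part \ref{fac:ii}, in particular the choice of which mixed term to add and subtract, and verifying that the two auxiliary vectors $\alpha^k,\beta^k$ indeed allow the second application of the telescoping identity in the form stated. Empty-product conventions at the boundary indices $k=1$ and $k=N$ (and $j=1$, $j=N-1$ in the inner sum) should be checked to make sure the formulas stay valid there.
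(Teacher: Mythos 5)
Your proof is correct and follows essentially the same route as the paper: part~\ref{fac:ii} is obtained by applying part~\ref{fac:i} twice and adding and subtracting the mixed term $b_{<k}(c_k-d_k)a_{>k}$, exactly as in the paper. The only superficial difference is that you prove part~\ref{fac:i} by writing the telescoping sum explicitly, whereas the paper dispatches it by induction; these are equivalent presentations of the same argument.
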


\begin{proof}
Point~\ref{fac:i} is easily proven by induction. As for Point~\ref{fac:ii}, we use Point~\ref{fac:i} twice and obtain
\begin{align*}
  & \prod_{i=1}^{N}{a_i}-\prod_{i=1}^{N}{b_i}-\left(\prod_{i=1}^{N}{c_i}-\prod_{i=1}^{N}{d_i}\right)\\ 
  & \quad = \sum_{k=1}^{N}{\left\{b_{<k}(a_k-b_k)a_{>k}-d_{<k}(c_k-d_k)c_{>k}\right\}} \\
  & \quad = \sum_{k=1}^{N}{\left\{b_{<k}(a_k-b_k-(c_k-d_k))a_{>k}+(c_k-d_k)(b_{<k}a_{>k}-d_{<k}c_{>k})\right\}}\\
  & \quad = \sum_{k=1}^{N}{b_{<k}(a_k-b_k-(c_k-d_k))a_{>k}}+\sum_{k=1}^{N}{\sum_{j=1}^{N-1}{(c_k-d_k)
      (\beta^k_{<j}(\alpha^k_j-\beta^k_j)\alpha^k_{>j})}},
\end{align*}
and the proof is complete.
\end{proof}

\section{Technical lemmas on fractional Sobolev spaces}
\label{app:b}
We recall a useful lemma about the multiplication of functions in fractional Sobolev spaces, which is a direct consequence of the
Sobolev embedding~\cite[Section 2.1]{Benyi2013a} and of~\cite[Lemma 5, inequality (25)]{Brezis2001a}.

\begin{lemma}
  \label{prod}
  Let $u,v\in H^{s}$, where $s=d/2+\eta$, for some $\eta>0$. Then $uv\in H^{s}$ and there exists
  $K_{\ref{prod}}=K_{\ref{prod}}(d,\eta)$ such that
  \begin{align*}
    \|uv\|_{H^s}\leq K_{\ref{prod}}\|u\|_{H^s}\|v\|_{H^s}.
  \end{align*}
\end{lemma}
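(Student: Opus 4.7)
The plan is to combine two standard ingredients, both cited in the statement: the Sobolev embedding $H^s(\mathbb{T}^d)\hookrightarrow L^\infty(\mathbb{T}^d)$ valid in the range $s=d/2+\eta>d/2$, and the Brezis--Mironescu type product estimate in fractional Sobolev spaces. The first step is simply to record that the hypothesis $\eta>0$ places $s$ strictly above the critical embedding exponent $d/2$, so there is a constant $K_e=K_e(d,\eta)$ with $\|u\|_{L^\infty}\leq K_e\|u\|_{H^s}$ for every $u\in H^s$, and likewise for $v$.

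The second step is to invoke \cite[Lemma 5, inequality (25)]{Brezis2001a} (with $p=2$) in the $W^{s,2}$ setting, which yields a constant $C=C(d,s)$ such that
\begin{equation*}
\|uv\|_{W^{s,2}}\leq C\bigl(\|u\|_{L^\infty}\|v\|_{W^{s,2}}+\|v\|_{L^\infty}\|u\|_{W^{s,2}}\bigr).
\end{equation*}
Although this estimate is stated on $\mathbb{R}^d$ in \cite{Brezis2001a}, it transfers to the bounded periodic setting $\mathbb{T}^d$ by the explicit Gagliardo seminorm definition~\eqref{eq:100}: the argument is local and only uses the non-negative seminorm contributions, so periodising has no effect beyond harmless constants. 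The third step is to use the norm equivalence $\|\cdot\|_{W^{s,2}}\simeq\|\cdot\|_{H^s}$ on $\mathbb{T}^d$ (\cite[Proposition 1.3]{Benyi2013a}, recalled in Subsection~\ref{ss:0}), which converts the previous display into the $H^s$ norm on both sides.

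Combining the three steps gives
\begin{equation*}
\|uv\|_{H^s}\leq C'\bigl(\|u\|_{L^\infty}\|v\|_{H^s}+\|v\|_{L^\infty}\|u\|_{H^s}\bigr)\leq 2C'K_e\|u\|_{H^s}\|v\|_{H^s},
\end{equation*}
which is the claimed bound with $K_{\ref{prod}}=2C'K_e$, depending only on $d$ and $\eta$. I do not anticipate any serious obstacle: the only point that requires a moment of care is confirming that the product inequality from \cite{Brezis2001a} applies verbatim on $\mathbb{T}^d$, but this is immediate from the Gagliardo seminorm formulation~\eqref{eq:100} and the compactness and translation invariance of the torus.
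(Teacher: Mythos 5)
Your proposal is correct and follows exactly the route the paper itself indicates: the paper states Lemma~\ref{prod} without proof, remarking only that it is ``a direct consequence of the Sobolev embedding~\cite[Section 2.1]{Benyi2013a} and of~\cite[Lemma 5, inequality (25)]{Brezis2001a},'' which are precisely the two ingredients you combine (together with the $W^{s,2}\simeq H^s$ equivalence from~\cite[Proposition 1.3]{Benyi2013a} that the paper also recalls in Subsection~\ref{ss:0}). You simply spell out the deduction the paper leaves to the reader, including the harmless transfer of the Brezis--Mironescu estimate to the periodic setting.
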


The following lemma is an adaptation of the classical multivariate Fa\`a Di Bruno's formula~\cite{constantine1996multivariate} in
the context of weak rather than classical derivatives. We derive it under some restrictive assumptions, which are however
satisfied by the nonlinearity $h_{\delta}$ in our regularised Dean--Kawasaki noise~\eqref{eq:25}.
\begin{lemma}
  \label{faa}
  Let $\alpha\in\{1,\dots,\lfloor d/2\rfloor\}$ and $u\in H^{d/2+\eta}$ for sufficiently small $\eta>0$. Pick $h_{\delta}\in
  C^{\lfloor d/2\rfloor+1}(\mathbb{R})$ with all derivatives up to order $\lfloor d/2\rfloor$ being bounded, and let
  $(x_{\ell_1},\dots,x_{\ell_{\alpha}})$ be an arbitrary element of $\{x_1,\dots,x_d\}^{\alpha}$. Then
  \begin{align}
    \label{i:1}
    \frac{\partial^{(\alpha)}}{\partial x_{\ell_1}\cdots\partial x_{\ell_\alpha}} h_{\delta}(u(\f{x}))
    = \sum_{\pi\in\Pi_{\alpha}}{h^{(|\pi|)}_{\delta}(u(\f{x}))\prod_{j\in J(\pi)}{\prod_{b\in B(\pi)\colon |b|=j}
        {\frac{\partial^{(|b|)}u(\f{x})}{\prod_{z\in b}{\partial x_{\ell_z}}}}}},
  \end{align}
  where we recall the notations $J(\pi)\coloneqq \{j\in\{1,\dots,\alpha \}\colon \beta_j(\pi)>0\}$ and $\beta_j(\pi)\coloneqq
  \#\{b\in B(\pi)\colon |b|=j\}$.  In particular, $\sum_{b\in B(\pi)}{|b|}=\alpha$ for every $\pi\in\Pi_{\alpha}$.
\end{lemma}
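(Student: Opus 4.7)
The plan is to establish identity~\eqref{i:1} by smooth approximation: I would apply the classical multivariate Fa\`a di Bruno formula to a sequence of smooth approximants of $u$ and then pass to the limit in both sides in a topology strong enough to preserve the identity in the sense of distributions (and hence as a weak-derivative identity).

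First, since $u\in H^{d/2+\eta}\hookrightarrow C^0$, I would pick $u_n\in C^{\infty}(\mathbb{T}^d)$ (say, Fourier truncations of $u$) with $u_n\to u$ in $H^{d/2+\eta}$ and, consequently, in $C^0$. The classical Fa\`a di Bruno formula then gives~\eqref{i:1} pointwise for each $u_n$. To pass to the limit on the left-hand side, I would invoke the Banach algebra property of $H^{d/2+\eta}$ (Lemma~\ref{prod}) together with the boundedness of the derivatives of $h_{\delta}$ up to order $\lfloor d/2\rfloor$, which via a standard Nemytskii-type argument (itself an iterated Fa\`a di Bruno expansion on the smooth $u_n$) shows $h_{\delta}(u_n)\to h_{\delta}(u)$ in $H^{d/2+\eta}$. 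Since $\partial^{(\alpha)}/\partial x_{\ell_1}\cdots\partial x_{\ell_{\alpha}}$ is continuous from $H^{d/2+\eta}$ into $H^{d/2+\eta-\alpha}$, the LHS of~\eqref{i:1} for $u_n$ converges to the LHS for $u$ in $H^{d/2+\eta-\alpha}$, in particular in $\mathcal{D}'(\mathbb{T}^d)$.

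To handle the right-hand side, I would treat each summand in the sum over $\pi\in\Pi_{\alpha}$ separately. The scalar prefactor $h_{\delta}^{(|\pi|)}(u_n)$ is uniformly bounded (by hypothesis) and converges uniformly to $h_{\delta}^{(|\pi|)}(u)$, since $u_n\to u$ in $C^0$ and $h_{\delta}^{(|\pi|)}$ is continuous. Each factor of the inner product $\partial^{(|b|)}u_n/\prod_{z\in b}\partial x_{\ell_z}$ belongs to $H^{d/2+\eta-|b|}$ and converges there as $n\to\infty$. Since $\sum_{b\in B(\pi)}|b|=\alpha\leq\lfloor d/2\rfloor$, I would apply iteratively a fractional Sobolev product estimate of the form $\|fg\|_{H^{s_1+s_2-d/2}}\leq C\|f\|_{H^{s_1}}\|g\|_{H^{s_2}}$ (in the spirit of Lemma~\ref{prod}, using the Brezis--Mironescu-type tools cited there) to conclude that the full product lives in a strictly positive-order Sobolev space and converges there. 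Multiplication by the uniformly bounded, uniformly convergent prefactor $h_{\delta}^{(|\pi|)}(u_n)$ then preserves convergence in $L^2$, hence in $\mathcal{D}'(\mathbb{T}^d)$.

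The main obstacle is the bookkeeping in this iterated multiplicative estimate: the factors have regularity indices $d/2+\eta-|b|$ that shrink toward $\eta$ when $|b|$ approaches $\lfloor d/2\rfloor$, so at each step of the iteration one must check that the admissibility hypotheses of the product inequality are met, possibly by further shrinking $\eta$ and distributing the total regularity budget $|B(\pi)|(d/2+\eta)-\alpha$ carefully across the successive multiplications. Once both sides have been shown to converge in $\mathcal{D}'(\mathbb{T}^d)$, the classical pointwise identity for each $u_n$ transfers to the limit, yielding~\eqref{i:1} as an identity of weak derivatives for the original $u$.
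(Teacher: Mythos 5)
Your overall strategy---approximate $u$ by smooth functions, apply the classical multivariate Fa\`a di Bruno formula to the approximants, and pass to the limit in the sense of distributions---is exactly the route taken in the paper (which uses mollifications $u_n = \varrho_n\ast u$ rather than Fourier truncations, an immaterial choice). The difference lies in how you justify the limit passage, and there you introduce an unnecessary claim that in fact creates a circularity risk.

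For the left-hand side you assert that $h_\delta(u_n)\to h_\delta(u)$ in $H^{d/2+\eta}$, attributing this to a ``Nemytskii-type argument'' via iterated Fa\`a di Bruno. But Fa\`a di Bruno applied to the smooth $u_n$ only gives a \emph{bound} on $\|h_\delta(u_n)\|_{H^{d/2+\eta}}$, not convergence; continuity of the composition operator $u\mapsto h_\delta(u)$ on $H^{d/2+\eta}$ is precisely the kind of statement Lemma~\ref{lem:7} establishes, and that lemma relies on Lemma~\ref{faa}. As written your argument is therefore at risk of circularity. The good news is that the claim is overkill: since $\partial^{(\alpha)}$ is continuous on $\mathcal{D}'(\mathbb{T}^d)$, it suffices to have $h_\delta(u_n)\to h_\delta(u)$ in $\mathcal{D}'$, and this follows immediately from $u_n\to u$ uniformly together with the boundedness of $h'_\delta$ (which gives uniform, hence $L^2$, hence distributional convergence). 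This is exactly how the paper handles that side, after first integrating by parts against a test function $\varphi\in C^\infty(\mathbb{T}^d)$.

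For the right-hand side you propose iterating a fractional bilinear estimate $\|fg\|_{H^{s_1+s_2-d/2}}\le C\|f\|_{H^{s_1}}\|g\|_{H^{s_2}}$ to show the product of derivatives converges in a positive-order Sobolev space, then multiplying by the uniformly convergent prefactor $h_\delta^{(|\pi|)}(u_n)$ to land in $L^2$. This can be made to work---the budget $\sum_{b\in B(\pi)}|b|=\alpha\le\lfloor d/2\rfloor$ keeps the intermediate indices positive---but you correctly flag that the admissibility conditions need careful checking at each step, and you do not carry this out. The paper instead uses the Sobolev embedding $H^{d/2+\eta-j}\subset L^{d/(j-\eta)}$ on each factor, shows $L^{d/(j-\eta)}$ convergence of each mollified derivative, and then closes with a single multi-factor H\"older inequality against the test function, with explicitly stated exponents that sum to one. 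This is more direct and avoids the iterated-product bookkeeping. So: same skeleton, but you should drop the $H^{d/2+\eta}$-convergence claim for $h_\delta(u_n)$ in favour of uniform convergence, and you would need to make the product-estimate bookkeeping explicit (or replace it with the H\"older-plus-embedding argument) for the proof to be complete.
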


\begin{proof}
We only need to show that~\eqref{i:1} holds in the sense of weak derivatives. Fix a test function $\varphi\in
C^{\infty}(\mathbb{T}^d)$. Consider a standard sequence of mollifiers $\varrho_n\colon \mathbb{T}^d\rightarrow \infty$, and set
$u_n\coloneqq \varrho_n\ast u$. As $u_n\in C^{\infty}(\mathbb{T}^d)$, we can apply the classical multivariate Fa\`a Di Bruno's
formula~\cite{constantine1996multivariate} to $h_{\delta}(u_n)$ and perform integration by parts to obtain
\begin{align}
  \label{eq:6004}
  & \int_{\mathbb{T}^d}{\sum_{\pi\in\Pi_{\alpha}}{h^{(|\pi|)}_{\delta}(u_n(\f{x}))\prod_{j\in J(\pi)}{\prod_{b\in B(\pi)\colon |b|=j}
        {\frac{\partial^{(|b|)}u_n(\f{x})}{\prod_{z\in b}{\partial x_{\ell_z}}}}}}\varphi(\f{x})\m\f{x}}\nonumber\\
  & = (-1)^{\alpha}\int_{\mathbb{T}^d}{h_{\delta}(u_n(\f{x}))\frac{\partial^{(\alpha)}}{\partial x_{\ell_1}\cdots\partial x_{\ell_\alpha}}
    \varphi(\f{x})\m\f{x}}.
\end{align}
All we need to do is pass to the limit in~\eqref{eq:6004} to replace $u_n$ with $u$.  Since $u$ is continuous on $\mathbb{T}^d$,
we have $u_n\rightarrow u$ uniformly as $n\rightarrow \infty$. Using the boundedness of $h'_{\delta}$, it is immediate to pass in
the limit in the right-hand side of~\eqref{eq:6004}.  Now fix $\pi\in\Pi_{\alpha}$. The embedding $H^{d/2+\eta-j}\subset
L^{d/(j-\eta)}$ (see~\cite[Corollary 1.2]{Benyi2013a}) implies that, for all blocks $b\in B(\pi)$ with length $j$,
\begin{align*}
{\frac{\partial^{(|b|)}u}{\prod_{z\in b}{\partial x_{\ell_z}}}} \in L^{d/(j-\eta)},
\end{align*}
and, as a result, 
\begin{align*}
  {\frac{\partial^{(|b|)}u_n}{\prod_{z\in b}{\partial x_{\ell_z}}}}=\varrho_n\ast{\frac{\partial^{(|b|)}u}
    {\prod_{z\in b}{\partial x_{\ell_z}}}}\longrightarrow {\frac{\partial^{(|b|)}u}{\prod_{z\in b}{\partial x_{\ell_z}}}}
  \mbox{ in }L^{d/(j-\eta)}\mbox{ as }n\rightarrow \infty.
\end{align*}
We can then settle the convergence of the left-hand side of~\eqref{eq:6004} using the boundedness of derivatives of $h_{\delta}$
and a multi-factor H\"older inequality (for any fixed $\pi\in\Pi_{\alpha}$) on the $|\pi|+2$ terms making up the
product. Specifically, the exponents we use are $q_{h_{\delta}}=q_{\varphi}=\infty$ (for the first and last term), $q_{j}\coloneqq 
d/(j-\eta)$ for the each of the $\beta_j$ terms associated with the product over the set $\{b\in B(\pi)\colon |b|=j\}$, and
$q\coloneqq d/(d-\alpha + \eta\sum_{j\in J}{\beta_j})$ for the remaining (identically unitary) term.
\end{proof}

The following two lemmas are concerned with integrability properties closed related to the product term appearing in the
right-hand side of~\eqref{i:1}.

\begin{lemma}
  \label{der}
  Fix $1\leq \alpha\leq \lfloor d/2 \rfloor$. Fix some partition $\pi\in\Pi_{\alpha}$, and abbreviate $\beta_j(\pi)=\beta_j$ and
  $J(\pi)=J$. Let $\{u_b\}_{b\in B(\pi)}\in [H^{d/2+\eta}]^{|\pi|}$ for some $0<\eta<1/2$. Let
  $(x_{\ell_1},\dots,x_{\ell_{\alpha}})$ be an arbitrary element of $\{x_1,\dots,x_d\}^{\alpha}$.  Then
  \begin{align}
    \label{i:9}
    & \int_{\mathbb{T}^d}{\left\{\prod_{j\in J}{\prod_{b\in B(\pi)\colon |b|=j}{\left|\frac{\partial^{(j)}
            u_b(\f{x})}{\prod_{z\in b}{\partial x_{\ell_z}}}\right|^2}}\right\}\m\f{x}}
    \leq K_{\ref{der}}\prod_{j\in J}{\prod_{b\in B(\pi)\colon |b|=j}{\|u_b\|_{H^{d/2+\eta}}^2}}
  \end{align}
  holds for some $K_{\ref{der}}=K_{\ref{der}}(\pi,d,\eta)>0$.
\end{lemma}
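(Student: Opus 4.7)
The plan is to combine fractional Sobolev embedding with a generalised H\"older inequality on the bounded domain $\mathbb{T}^d$. First I observe that each factor $\partial^{(j)} u_b/\prod_{z\in b}\partial x_{\ell_z}$ appearing in the integrand is a $j$-th order weak derivative of $u_b\in H^{d/2+\eta}$, and hence belongs to $H^{d/2+\eta-j}$. Since $1\le j\le\alpha\le\lfloor d/2\rfloor$ and $0<\eta<1/2$, one has $0<d/2+\eta-j<d/2$, and the embedding already used in the proof of Lemma~\ref{faa} (see~\cite[Corollary 1.2]{Benyi2013a}) yields
\begin{align*}
  \left\|\frac{\partial^{(j)}u_b}{\prod_{z\in b}\partial x_{\ell_z}}\right\|_{L^{p_j}}\le C(d,\eta)\,\|u_b\|_{H^{d/2+\eta}},\qquad p_j\coloneqq \frac{d}{j-\eta}.
\end{align*}

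Next I apply a generalised H\"older inequality to the product of $|\pi|$ squared factors, assigning the exponent $q_b\coloneqq p_{|b|}/2$ to the factor indexed by block $b$. Using the identity $\sum_{j\in J(\pi)}j\beta_j(\pi)=\alpha$, the reciprocal sum is
\begin{align*}
  \sum_{b\in B(\pi)}\frac{1}{q_b}=\sum_{j\in J(\pi)}\frac{2\beta_j(\pi)(j-\eta)}{d}=\frac{2(\alpha-\eta|\pi|)}{d},
\end{align*}
which is strictly less than $1$ because $\alpha\le\lfloor d/2\rfloor\le d/2$ and $\eta>0$. The slack can be absorbed by slightly decreasing each $q_b$, exploiting the trivial inclusion $L^{p_j}\subset L^{p_j'}$ valid on the bounded domain $\mathbb{T}^d$ whenever $p_j\ge p_j'$. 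This produces admissible H\"older exponents summing exactly to $1$ and gives
\begin{align*}
  \int_{\mathbb{T}^d}\prod_{b\in B(\pi)}\left|\frac{\partial^{(|b|)}u_b(\f{x})}{\prod_{z\in b}\partial x_{\ell_z}}\right|^2\m\f{x}\le C\prod_{b\in B(\pi)}\left\|\frac{\partial^{(|b|)}u_b}{\prod_{z\in b}\partial x_{\ell_z}}\right\|_{L^{p_{|b|}}}^2\le K_{\ref{der}}\prod_{b\in B(\pi)}\|u_b\|_{H^{d/2+\eta}}^2,
\end{align*}
where the first inequality is H\"older, the second is the Sobolev embedding above, and $K_{\ref{der}}=K_{\ref{der}}(\pi,d,\eta)$.

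The only non-routine step is the arithmetic check of admissibility of the H\"older exponents; it rests on the identity $\sum_{j\in J(\pi)}j\beta_j(\pi)=\alpha$ (inherent to $\pi$ being a partition of $\{1,\dots,\alpha\}$) together with the hypothesis $\alpha\le\lfloor d/2\rfloor$, which together with the $\eta>0$ slack ensures that the reciprocal sum $2(\alpha-\eta|\pi|)/d$ stays strictly below $1$ for every admissible $\pi$. No further estimates are required.
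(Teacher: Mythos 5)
Your proof is correct and follows essentially the same route as the paper's. The paper likewise applies a multi-factor H\"older inequality with exponents $q_j=d/(2(j-\eta))$ for the nontrivial factors and the Sobolev embedding $H^{d/2+\eta-j}\subset L^{2q_j}$ from \cite[Corollary 1.2]{Benyi2013a}; the only cosmetic difference is that the paper accounts for the fact that the reciprocals sum to less than one by assigning the leftover exponent $q=d/(d-2\alpha+2\eta\sum_{j\in J}\beta_j)$ to an identically unitary extra factor, whereas you absorb the slack via the $L^{p}\subset L^{p'}$ inclusion on the bounded torus --- two equivalent ways to state the same thing.
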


\begin{proof}
We use a multi-factor H\"older inequality on the $|\pi|$ terms making up the product in the left-hand side of~\eqref{i:9}. The
exponents we use are $q_{j}\coloneqq d/(2(j-\eta))$ for the each of the $\beta_j$ terms associated with the product over the set
$\{b\in B(\pi)\colon |b|=j\}$, and $q\coloneqq d/(d-2\alpha + 2\eta\sum_{j\in J}{\beta_j})$ for the remaining (identically
unitary) term. We obtain
\begin{align*}
  \eqref{i:9} & \leq C(\alpha,\pi,d,\eta)\prod_{j\in J}{\prod_{b\in B(\pi)\colon |b|=j}
    {\left\{\int_{\mathbb{T}^d}{\left|\partial^{(j)}u_b(\f{x})/\prod_{z\in b}
        {\partial x_{\ell_z}}\right|^{2q_j}}\m\f{x}\right\}^{1/q_j}}}\nonumber\\
  & \leq K_{\ref{der}}\prod_{j\in J}{\prod_{b\in B(\pi)\colon |b|=j}{\|u_b\|_{H^{d/2+\eta}}^2}},
\end{align*}
where we have used the Sobolev embeddings $H^{d/2+\eta-j}\subset L^{2q_j}$ (see~\cite[Corollary 1.2]{Benyi2013a}) in the final
inequality.
\end{proof}

\begin{lemma}
  \label{gagl}
  Fix some $\pi\in\Pi_{\lfloor d/2 \rfloor}$, and abbreviate $\beta_j(\pi)=\beta_j$ and $J(\pi)=J$. Let $\{u_b\}_{b\in B(\pi)}\in
  [H^{d/2+\eta}]^{|\pi|}$, $v\in H^{d/2+\eta}$ where $0<\eta<C(d)<1/2$ for some small enough $C(d)$. Let
  $(x_{\ell_1},\dots,x_{\ell_{\lfloor d/2 \rfloor}})$ be an arbitrary element of $\{x_1,\dots,x_d\}^{\lfloor d/2 \rfloor}$, and
  let $\f{z}$ be an arbitrary element of $\{\f{x},\f{y}\}^{|\pi|}$.
  \begin{enumerate}
  \item\label{gagl:i} The inequality
    \begin{align}
      \label{i:3}
      & \int_{\mathbb{T}^d}{\int_{\mathbb{T}^d}{\left\{\prod_{j\in J}{\prod_{b\in B(\pi)\colon |b|=j}{\left|\frac{\partial^{(j)}
                u_b(\f{z}_b)}{\prod_{z\in b}{\partial x_{\ell_z}}}\right|^2}}\right\}\frac{\left|v(\f{x})-v(\f{y})\right|^2}
          {|\f{x}-\f{y}|^{d+(d/2-\lfloor d/2\rfloor + \eta )2}}\m\f{x}}\m\f{y}}\nonumber\\
      & \quad \leq K_{\ref{gagl}} \left(\prod_{j\in J}{\prod_{b\in B(\pi)\colon |b|=j}{\|u_b\|_{H^{d/2+\eta}}^2}}\right)
      \left\|v\right\|^{2}_{H^{d/2+\eta}} 
    \end{align}
    holds for some positive $K_{\ref{gagl}}=K_{\ref{gagl}}(d,\eta)$. 
  \item\label{gagl:ii} Pick $\tildej \in J$ and $\tilde{b}\in\{b\in B(\pi):|b|=\tildej \}$. Then we have the inequality
    \begin{align}
      \label{i:6}
      & \int_{\mathbb{T}^d}{\int_{\mathbb{T}^d}{\left\{\prod_{j\in J\setminus \tildej }{\prod_{b\in B(\pi)\colon |b|=j}
            {\left|\frac{\partial^{(j)}u_b(\f{z}_b)}{\prod_{z\in b}{\partial x_{\ell_z}}}\right|^2}}\right\}
          \left\{\prod_{b\in B(\pi)\colon |b|=\tildej ,b\neq \tilde{b}}{\left|\frac{\partial^{(\tildej )}u_b(\f{z}_b)}
            {\prod_{z\in b}{\partial x_{\ell_z}}}\right|^2}\right\}}}\nonumber\\
      & \times \frac{\left|\partial^{(\tildej )}u_{\tilde{b}}(\f{x})/\prod_{z\in \tilde{b}}{\partial x_{\ell_z}}-\partial^{(\tildej )}
        u_{\tilde{b}}(\f{y})/\prod_{z\in \tilde{b}}{\partial x_{\ell_z}}\right|^2}{|\f{x}-\f{y}|^{d+(d/2-\lfloor d/2\rfloor + \eta )2}}
      \m\f{x}\m\f{y}\nonumber\\
      & \quad \leq K_{\ref{gagl}} \left(\prod_{j\in J}{\prod_{b\in B(\pi)\colon |b|=j}{\|u_b\|_{H^{d/2+\eta}}^2}}\right).
    \end{align}
  \end{enumerate}
\end{lemma}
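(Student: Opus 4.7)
The approach for both (i) and (ii) is an application of Hölder's inequality to the double integral over $\mathbb{T}^d\times\mathbb{T}^d$, separating the $|\pi|$ derivative factors from the Gagliardo-type remainder. For each factor $|\partial^{(j_b)} u_b(\f{z}_b)|^2$ I select the Hölder exponent $q_{j_b}=d/(2(j_b-\eta))$, which is tied to the sharp Sobolev embedding $H^{d/2+\eta-j}\subset L^{d/(j-\eta)}$ (valid since $j\leq\lfloor d/2\rfloor$ and $\eta<1/2$); this controls $\|\,|\partial^{(j)} u_b|^2\|_{L^{q_j}}$ by $\|u_b\|_{H^{d/2+\eta}}^2$. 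The conjugate exponent $r$ for the Gagliardo remainder is then forced by $\sum_b 1/q_{j_b}+1/r=1$, giving $1/r=(d-2\lfloor d/2\rfloor+2\eta|\pi|)/d$ up to slight modification in (ii), where the singled-out block $\tilde b$ is excluded from the product side.

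For part (ii), the Gagliardo remainder $|\partial^{(\tilde j)} u_{\tilde b}(\f{x})-\partial^{(\tilde j)} u_{\tilde b}(\f{y})|^2/|\f{x}-\f{y}|^{d+2\eta'}$ has $L^1$ integral equal to the squared $H^{\eta'}$-Gagliardo seminorm of $\partial^{(\tilde j)} u_{\tilde b}$, which in turn is dominated by $\|u_{\tilde b}\|_{H^{\tilde j+\eta'}}^2\leq\|u_{\tilde b}\|_{H^{d/2+\eta}}^2$ through the Sobolev monotonicity inherent in $\tilde j+\eta'=\tilde j+d/2-\lfloor d/2\rfloor+\eta\leq d/2+\eta$. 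When $r>1$ I pass from the $L^r$-norm to this Gagliardo integral by Jensen's inequality applied to the inner $\f{y}$-integration, followed by a fractional Sobolev embedding $W^{d/2+\eta-\tilde j,2}\hookrightarrow W^{\tilde\sigma,2r}$ for the explicit $\tilde\sigma$ fixed by scaling.

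Part (i) is analogous: $v\in H^{d/2+\eta}$ plays the role of $\partial^{(\tilde j)} u_{\tilde b}$, and its $H^{\eta'}$-Gagliardo seminorm is bounded by $\|v\|_{H^{d/2+\eta}}^2$. The same Jensen--Sobolev chain transfers the estimate to the needed $L^r$-norm. Mixed-argument choices $\f{z}_b\in\{\f{x},\f{y}\}^{|\pi|}$ that split between the two integration variables reduce to the uniform cases by the symmetry $\f{x}\leftrightarrow\f{y}$ in the integrand.

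The main obstacle will be the admissibility of the Sobolev embedding used for the $L^r$-norm of the Gagliardo remainder: a scaling computation shows that the embedding sits near its critical line when $\eta$ is small, and for worst-case partitions the direct Sobolev inclusion becomes borderline. The remedy, which in turn pins down the threshold $C(d)$, is to interpolate the pointwise Hölder-continuity bound $|v(\f{x})-v(\f{y})|\leq C\|v\|_{H^{d/2+\eta}}|\f{x}-\f{y}|^{\eta}$ (obtained from $H^{d/2+\eta}\subset C^{0,\eta}$) into the Gagliardo kernel, absorbing a factor $|\f{x}-\f{y}|^{2\alpha\eta}$ into the singular denominator for a suitable $\alpha\in(0,2)$ and rebalancing the Hölder exponents. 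The compatibility of this interpolation with the Sobolev embeddings dictates the smallness requirement $\eta<C(d)<1/2$.
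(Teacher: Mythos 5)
Your overall scaffolding---a multi-factor H\"older inequality with exponents $q_j=d/(2(j-\eta))$ on the derivative factors, followed by a fractional Sobolev embedding for the Gagliardo remainder---matches the paper's. The crucial step you are missing, however, is that the paper does \emph{not} apply H\"older directly to the integrand as it stands. Before applying H\"older, the paper inserts a factor $|\f{x}-\f{y}|^{-\gamma}$ of size $\gamma\approx 2\lfloor d/2\rfloor$ and distributes it among the $|\pi|$ derivative factors, so that each H\"older factor associated with a block $b$ becomes
$\int_{\mathbb{T}^d}\int_{\mathbb{T}^d}|\partial^{(j)}u_b(\f{z}_b)/\cdots|^{2q_j}\,|\f{x}-\f{y}|^{-\gamma\alpha_j\beta_j^{-1}q_j}\,\m\f{x}\,\m\f{y}$. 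Because each $\partial^{(j)}u_b(\f{z}_b)$ depends on only one of $\f{x},\f{y}$, this splits into a finite spherical integral $\int_{\mathbb{T}^d}|\f{y}|^{-\gamma\alpha_j\beta_j^{-1}q_j}\m\f{y}$ times an $L^{2q_j}$-norm. This mechanism lets the derivative factors absorb an $O(\lfloor d/2\rfloor)$ amount of the kernel singularity, leaving only $|\f{x}-\f{y}|^{-(d+2\eta'-\gamma)}$ (with $\eta'=d/2-\lfloor d/2\rfloor+\eta$) for the remainder, which then corresponds to a Gagliardo seminorm $[v]_{W^{r,2q}}$ with $r\in(0,1)$ reachable from $H^{d/2+\eta}$.

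Without this pre-distribution, your H\"older step produces a remainder term of the form $\int\int|v(\f{x})-v(\f{y})|^{2q}/|\f{x}-\f{y}|^{(d+2\eta')q}$, and a scaling check shows the implied Gagliardo order is $r'=d/2+\eta(1-|\pi|)\ge 1$ for $d\ge 2$, so the embedding you need is not merely ``near critical'' but genuinely unavailable (already for $d=2$, $|\pi|=1$: $r'=1$ and the embedding would require $\eta\ge 1/2$). Your proposed remedy---peeling off a factor $|\f{x}-\f{y}|^{2\alpha\eta}$ via $H^{d/2+\eta}\subset C^{0,\eta}$---can only reduce the kernel exponent by $O(\eta)$, which is far short of the $O(\lfloor d/2\rfloor)$ reduction required. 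So the interpolation you describe cannot close the gap. What makes the paper's construction work, and what you need to add, is that the $\gamma$-split routes the bulk of the singularity into integrals of $|\f{y}|^{-a}$ over $\mathbb{T}^d$ (finite as long as $a<d$), subject to the constraints $\alpha_j<2\beta_j(j-\eta)/\gamma$, $\sum\alpha_j=1$, together with the matching constraint for the Sobolev embedding of the remainder; verifying that these three conditions are simultaneously satisfiable for small $\eta$ and the explicit choice $\gamma=2\lfloor d/2\rfloor-3\eta\sum\beta_j$ (and its variant for (ii)) is the real content of the lemma.
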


\begin{proof}
It is useful to remember $\sum_{j\in J}{j\beta_j}=\lfloor d/2 \rfloor$.

\emph{Point~\ref{gagl:i}}. We rewrite~\eqref{i:3} as
\begin{align}
  \label{i:4}
  \int_{\mathbb{T}^d}{\int_{\mathbb{T}^d}{\left\{\prod_{j\in J}{\prod_{b\in B(\pi)\colon |b|=j}{\frac{\left|\partial^{(j)}
            u_b(\f{z}_b)/\prod_{z\in b}{\partial x_{\ell_z}}\right|^2}{|\f{x}-\f{y}|^{\gamma\alpha_j\beta^{-1}_j}}}}
      \right\}\frac{\left|v(\f{x})-v(\f{y})\right|^2}{|\f{x}-\f{y}|^{d+2(d/2-\lfloor d/2\rfloor + \eta )-\gamma}}\m\f{x}}\m\f{y}},
\end{align}
for some appropriate $\gamma>0$ and $\{\alpha_j\}_{j\in J}$ such that 
\begin{align}
  \label{c:alpha}
  \alpha_j\in[0,1]\quad\forall j\in J,\quad \sum_{j\in J}{\alpha_j}=1
\end{align} 
to be chosen later. We use a multi-factor H\"older inequality on the $|\pi|+1$ terms making up~\eqref{i:4}. The exponents we use
are $q_{j}\coloneqq d/(2(j-\eta))$ for the each of the $\beta_j$ terms associated with the product over the set $\{b\in
B(\pi)\colon |b|=j\}$, and $q\coloneqq d/(d-2\lfloor d/2\rfloor + 2\eta\sum_{j\in J}{\beta_j})$ for the remaining term. We obtain
\begin{align*}
\eqref{i:4} & \leq \prod_{j\in J}{\prod_{b\in B(\pi)\colon |b|=j}{\left\{\int_{\mathbb{T}^d}{\int_{\mathbb{T}^d}
        {\frac{\left|\partial^{(j)}u_b(\f{z}_b)/\prod_{z\in b}{\partial x_{\ell_z}}\right|^{2q_j}}
          {|\f{x}-\f{y}|^{\gamma\alpha_j\beta^{-1}_j q_j}}}}\m\f{x}\m\f{y}\right\}^{1/q_j}}}\nonumber\\
  & \quad \times \left\{\int_{\mathbb{T}^d}{\int_{\mathbb{T}^d}{\frac{\left|v(\f{x})-v(\f{y})\right|^{2q}}
      {|\f{x}-\f{y}|^{\{d+2(d/2-\lfloor d/2\rfloor + \eta )-\gamma\}q}}}\m\f{x}\m\f{y}}\right\}^{1/q}\nonumber\\
  & = \prod_{j\in J}{\prod_{b\in B(\pi)\colon |b|=j}{\left\{\int_{\mathbb{T}^d}{\frac{1}{|\f{y}|^{\gamma\alpha_j\beta^{-1}_j q_j}}}
      \m\f{y}\right\}^{1/q_j}\left\{\int_{\mathbb{T}^d}{\left|\partial^{(j)}u_b(\f{x})/\prod_{z\in b}{\partial x_{\ell_z}}\right|^{2q_j}}
      \m\f{x}\right\}^{1/q_j}}}\nonumber\\
  & \quad \times \left\{\int_{\mathbb{T}^d}{\int_{\mathbb{T}^d}{\frac{\left|v(\f{x})-v(\f{y})
        \right|^{2q}}{|\f{x}-\f{y}|^{\{d+2(d/2-\lfloor d/2\rfloor + \eta )-\gamma\}q}}}\m\f{x}\m\f{y}}\right\}^{1/q} \\
& \coloneqq \left[\prod_{j\in J}{\prod_{b\in B(\pi)\colon |b|=j}{C^{1/q_j}_j D^{1/q_j}_{j,b}}}\right]\times E^{^{1/q}}.
\end{align*}
We now impose conditions on $\eta$ and $\gamma$ so that $C_j,D_j$, and $E$ are suitably bounded. The integrals $C_j$ may be dealt
with using a standard change of variables in spherical coordinates, and they are bounded if and only if
$-\gamma\alpha_j\beta^{-1}_jq_j+(d-1)>-1$, or equivalently if
\begin{align}
  \label{c:sp}
  \alpha_j < \frac{2\beta_j(j-\eta)}{\gamma},\qquad \forall j\in J.
\end{align}
The terms $D_{j,b}^{1/q_j}$ are bounded, as in the case of Lemma~\ref{der}, by using the Sobolev embedding $H^{d/2+\eta-j}\subset
L^{2q_j}$~\cite[Corollary 1.2]{Benyi2013a}. We now turn to $E$. We rewrite the exponent of $|\f{x}-\f{y}|$ according to the
notation of the space $W^{r,2q}$, for some $r$ to be determined. More precisely, the rewriting
\begin{equation*}
  \{d+2(d/2-\lfloor d/2\rfloor + \eta )-\gamma\}q=d+r(2q)
\end{equation*}
is solved in $r$, giving $r=(d-\gamma)/2+\eta(1-\sum_{j\in J}{\beta_j})$. The restriction $r\in(0,1)$ gives the condition
\begin{align}
  \label{c:s}
  d-2+2\eta(1-\sum_{j\in J}{\beta_j})<\gamma <d+2\eta(1-\sum_{j\in J}{\beta_j}).
\end{align}
The term $E$ may be bounded using the Sobolev embedding $W^{d/2+\eta,2}\subset W^{r,2q}$, and this embedding is true under the
condition~\cite[Theorem 5.1]{Amann2000a}
\begin{align*}
  d/2+\eta-d/2 > r - d/(2q),
\end{align*}
which is equivalent to
\begin{align}
  \label{c:E}
  \gamma > 2\lfloor d/2\rfloor-4\eta\sum_{j\in J}\beta_j.
\end{align}
If we pick $\gamma\coloneqq 2\lfloor d/2\rfloor-3\eta\sum_{j\in J}\beta_j$ and $\eta$ small enough, then~\eqref{c:E}
and~\eqref{c:s} are satisfied. Furthermore, summing the right-hand side of~\eqref{c:sp} over $j$, we obtain
\begin{align}
  \sum_{j\in J}{\frac{2\beta_j(j-\eta)}{2\lfloor d/2\rfloor-3\eta\sum_{j\in J}\beta_j}}
  =\frac{2\lfloor d/2\rfloor-2\eta\sum_{j\in J}{\beta_j}}{2\lfloor d/2\rfloor-3\eta\sum_{j\in J}{\beta_j}}>1.
\end{align}
The above inequality implies that the $\alpha_j$'s can be chosen so that~\eqref{c:alpha} and~\eqref{c:sp} are satisfied. As a
result of the bounds for $C_j,D_{j,b},E$, the inequality~\eqref{i:3} follows and \emph{Point~\ref{gagl:i}} is settled.

\emph{Point~\ref{gagl:ii}}. The case $\sum_{j\in J}{\beta_j}=1$ uniquely corresponds to having $\tildej =\lfloor d/2 \rfloor $
and $\beta_{\tildej }=1$. Therefore, the only term surviving in the product of integrands in the left-hand side of~\eqref{i:6} is
the last term, and the result is trivial.

We consider all the other cases, where necessarily $\sum_{j\in J}{\beta_j}>1$. We rewrite~\eqref{i:6} as
\begin{align}
  \label{i:7}
  & \int_{\mathbb{T}^d}{\int_{\mathbb{T}^d}{\left\{\prod_{j\in J\setminus \tildej }{\prod_{b\in B(\pi)\colon |b|=j}
        {\frac{\left|\partial^{(j)}u_b(\f{z}_b)/\prod_{z\in b}{\partial x_{\ell_z}}\right|^2}
          {|\f{x}-\f{y}|^{\gamma\alpha_j\beta^{-1}_j}}}}\right\}}}\nonumber\\
  & \quad\times\left\{\prod_{b\in B(\pi)\colon |b|=\tildej ,b\neq \tilde{b}}{\frac{\left|\partial^{(\tildej )}
      u_b(\f{z}_b)/\prod_{z\in b}{\partial x_{\ell_z}}\right|^2}{|\f{x}-\f{y}|^{\gamma\alpha_{\tildej }(\beta_{\tildej }-1)^{-1}}}}
  \right\}\nonumber\\
  & \quad \times \frac{\left|\partial^{(\tildej )}u_{\tilde{b}}(\f{x})/\prod_{z\in \tilde{b}}{\partial x_{\ell_z}}-\partial^{(\tildej )}
    u_{\tilde{b}}(\f{y})/\prod_{z\in \tilde{b}}{\partial x_{\ell_z}}\right|^2}{|\f{x}-\f{y}|^{d+(d/2-\lfloor d/2\rfloor + \eta )2-\gamma}}
  \m\f{x}\m\f{y},
\end{align}
where the second curly brackets is understood to be equal to $1$ should $\beta_{\tildej }=1$, for some appropriate $\gamma>0$ and
$\{\alpha_j\}_{j\in J^{\star}}$ such that
\begin{align}
  \label{c:alpha2}
  \alpha_j\in[0,1]\quad\forall j\in J^{\star},\quad \sum_{j\in J^{\star}}{\alpha_j}=1
\end{align} 
to be chosen later, where $J^{\star}=J$ if $\beta_{\tildej }>1$, and $J^{\star}=J\setminus \tildej $ otherwise. We use a
multi-factor H\"older inequality on the $|\pi|$ terms making up~\eqref{i:7}. The exponents we use are $q_{j}\coloneqq
d/(2(j-\eta))$ for the each of the $\beta_j$ terms associated with the product over the set $\{b\in B(\pi)\colon |b|=j\}$ for
$j\in J\setminus \tildej $, then $q_{\tildej }\coloneqq d/(2(\tildej -\eta))$ for the each of the $\beta_{\tildej }-1$ terms
associated with the product over the set $\{b\in B(\pi)\colon |b|=\tildej ,b\neq \tilde{b}\}$, and finally $q\coloneqq
d/(d-2\lfloor d/2\rfloor + 2\eta\sum_{j\in J}{\beta_j}+2(\tildej -\eta))$ for the remaining term. 
We obtain
\begin{align*}%
\eqref{i:7} 
& \leq \prod_{j\in J\setminus\tildej }{\prod_{b\in B(\pi)\colon |b|=j}{\left\{\int_{\mathbb{T}^d}{\frac{1}
      {|\f{y}|^{\gamma\alpha_j\beta^{-1}_jq_j}}}\m\f{y}\right\}^{1/q_j}\left\{\int_{\mathbb{T}^d}{\left|\partial^{(j)}
      u_b(\f{x})/\prod_{z\in b}{\partial x_{\ell_z}}\right|^{2q_j}}\m\f{x}\right\}^{1/q_j}}}\nonumber\\
& \quad \times \prod_{b\in B(\pi)\colon |b|=\tildej ,b\neq \tilde{b}}{\left\{\int_{\mathbb{T}^d}{\frac{1}
    {|\f{y}|^{\gamma\alpha_{\tildej }(\beta_{\tildej }-1)^{-1}q_{\tildej }}}}\m\f{y}\right\}^{1/q_{\tildej }}\left\{\int_{\mathbb{T}^d}
  {\left|\partial^{(j)}u_b(\f{x})/\prod_{z\in b}{\partial x_{\ell_z}}\right|^{2q_{\tildej }}}\m\f{x}\right\}^{1/q_{\tildej }}}\nonumber\\
& \quad \times \left\{\int_{\mathbb{T}^d}{\int_{\mathbb{T}^d}{\frac{\left|\partial^{(\tildej )}u_{\tilde{b}}(\f{x})/
      \prod_{z\in \tilde{b}}{\partial x_{\ell_z}}-\partial^{(\tildej )}u_{\tilde{b}}(\f{y})/\prod_{z\in \tilde{b}}{\partial x_{\ell_z}}
      \right|^{2q}}{|\f{x}-\f{y}|^{\{d+2(d/2-\lfloor d/2\rfloor + \eta )-\gamma\}q}}}\m\f{x}\m\f{y}}\right\}^{1/q}\\
&  \coloneqq \left[\prod_{j\in J\setminus\tildej }{\prod_{b\in B(\pi)\colon |b|=j}{C^{1/q_j}_jD^{1/q_j}_{j,b}}}\right]\times
\left[\prod_{b\in B(\pi)\colon |b|=j,b\neq\tilde{b}}{C^{1/q_{\tildej }}_{\tildej }D^{1/q_{\tildej }}_{{\tildej },b}}\right]\times E^{^{1/q}}.
\end{align*}
Bounding the above involves similar discussions as per \emph{Point~\ref{gagl:i}}. More specifically, the boundedness of the
spherical integrals (the $C_j$'s above) is granted under the conditions
\begin{align}
  \label{c:sp2}
  \alpha_j < \frac{2\beta_j(j-\eta)}{\gamma},\qquad \forall j\in J^{\star}\setminus\tildej ,\qquad \alpha_{\tildej }<
  \frac{2(\beta_{\tildej }-1)(\tildej -\eta)}{\gamma},
\end{align}
with the last condition only imposed if $\tildej \in J^{\star}$. The bound for the terms $D_{j,b}^{1/q_j}$ is settled exactly as
in \emph{Point~\ref{gagl:i}}. As for $E$, we solve the equation
\begin{equation*}
  \{d+2(d/2-\lfloor d/2\rfloor + \eta )-\gamma\}q = d + r(2q)
\end{equation*}
in the variable $r$, thus getting $r\coloneqq (d-\gamma)/2+\eta(1-\sum_{j\in J}{\beta_j})-(\tildej -\eta)$. The constraint
$r\in(0,1)$ results in the requirement
\begin{align}
  \label{c:s2}
  d-2+2\eta(1-\sum_{j\in J}{\beta_j})-2(\tildej -\eta)<\gamma <d+2\eta(1-\sum_{j\in J}{\beta_j})-2(\tildej -\eta).
\end{align}
We control $E$ using the embedding $H^{d/2+\eta}\subset W^{\tildej +r,2q}$, which is valid under the constraint
\begin{align*}
  d/2+\eta-d/2 > \tildej +r - d/(2q),
\end{align*}
which is equivalent to
\begin{align}
  \label{c:E2}
  \gamma > 2\lfloor d/2\rfloor+4\eta(1-\sum_{j\in J}\beta_j)-2\tildej .
\end{align}
If we take $\gamma\coloneqq 2\lfloor d/2\rfloor+3\eta(1-\sum_{j\in J}\beta_j)-2\tildej $ and $\eta$ small enough,
then~\eqref{c:s2} and~\eqref{c:E2} are satisfied. Furthermore, summing all the right-hand sides in~\eqref{c:sp2} gives
\begin{align*}
  & \mathbf{1}_{\beta_{\tildej }>1}\frac{2(\beta_{\tildej }-1)(\tildej -\eta)}{2\lfloor d/2\rfloor+3\eta(1-\sum_{j\in J}\beta_j)
    -2\tildej }+\sum_{j\in J\setminus \tildej }{\frac{2\beta_j(j-\eta)}{2\lfloor d/2\rfloor+3\eta(1-\sum_{j\in J}\beta_j)
      -2\tildej }} \nonumber\\
  & \quad = \sum_{j\in J}{\frac{2\beta_j(j-\eta)}{2\lfloor d/2\rfloor+3\eta(1-\sum_{j\in J}\beta_j)-2\tildej }}
  - \frac{2(\tildej -\eta)}{2\lfloor d/2\rfloor+3\eta(1-\sum_{j\in J}\beta_j)-2\tildej }\nonumber\\
  & \quad = \frac{2\lfloor d/2\rfloor+2\eta(1-\sum_{j\in J}\beta_j)-2\tildej }{2\lfloor d/2\rfloor
    +3\eta(1-\sum_{j\in J}\beta_j)-2\tildej }>1,
\end{align*}
where the last inequality is valid because $\sum_{j\in J}{\beta_j}>1$. Therefore the $\alpha_j$'s can be chosen so
that~\eqref{c:alpha2} and~\eqref{c:sp2} are satisfied. As a result of the bounds for $C_j,D_{j,b},E$, the inequality~\eqref{i:6}
follows and \emph{Point~\ref{gagl:ii}} is settled.
\end{proof}

{\bfseries Acknowledgements} 
All authors thank the anonymous referee for his/her careful reading of the manuscript and valuable suggestions.
This paper was motivated by stimulating discussions at the First Berlin--Leipzig Workshop on
Fluctuating Hydrodynamics in August 2019 with Ana Djurdjevac, Rupert Klein and Ralf Kornhuber. JZ gratefully acknowledges funding
by a Royal Society Wolfson Research Merit Award. FC gratefully acknowledges funding from the European Union's Horizon 2020
research and innovation programme under the Marie Sk\l{}odowska-Curie grant agreement No. 754411.

\def\cprime{$'$} \def\cprime{$'$} \def\cprime{$'$}
  \def\polhk#1{\setbox0=\hbox{#1}{\ooalign{\hidewidth
  \lower1.5ex\hbox{`}\hidewidth\crcr\unhbox0}}} \def\cprime{$'$}
  \def\cprime{$'$}

\def\cprime{$'$} \def\cprime{$'$} \def\cprime{$'$}
  \def\polhk#1{\setbox0=\hbox{#1}{\ooalign{\hidewidth
  \lower1.5ex\hbox{`}\hidewidth\crcr\unhbox0}}} \def\cprime{$'$}
  \def\cprime{$'$}

\end{document}